\documentclass[10pt]{amsart}
\usepackage{amsmath,amssymb,amsbsy,amsfonts,amsthm,latexsym,
                        amsopn,amstext,amsxtra,euscript,amscd,mathrsfs,color,bm}
                        
\usepackage{float} 
\usepackage[english]{babel}
\usepackage{mathtools}
\usepackage{url}
\usepackage[colorlinks,linkcolor=blue,anchorcolor=blue,citecolor=blue,backref=page]{hyperref}
\usepackage{enumitem}

\usepackage{setspace}

\newtheorem{theorem}{Theorem}
\newtheorem{lemma}{Lemma}

\newtheorem{remark}{Remark}
\newtheorem{prop}{Proposition}

\newtheorem{observation}{Observation}

\def\cH{{\mathcal H}}

\newtheorem{corollary}{Corollary}

\def\\{\cr}
\def\({\left(}
\def\){\right)}
\def\[{\left[}
\def\]{\right]}
\def\<{\langle}
\def\>{\rangle}

\def\le{\leqslant}
\def\ge{\geqslant}

\def\Hb1{\overline{\cH}_{m}}
\def\Ht1{\widetilde{\cH}_{m}}

\keywords{Apéry sequences, modular forms, rational approximations, Eichler integrals, Hauptmoduln}
\subjclass[2020]{11F11, 11J72, 11M06, 33C20}

\title[]{Modular Forms and Numerical Explorations of Rational Approximations to \texorpdfstring{$\zeta(3)$}{zeta(3)}}

\author{Cynthia Bortolotto  }
\address{IMPA, Brazil}
\email{cynthiab@impa.br}

\author{Lucas Oliveira  }
\address{UFRGS, Brazil}
\email{lucas.oliveira@ufrgs.br}

\begin{document}

\begin{abstract}
We revisit Beukers' modular-form proof of the irrationality of $\zeta(3)$ from the point of view of the auxiliary weight two modular form.  For the Fricke group $\Gamma_0(6)^\star$, we show that Beukers' choice is not isolated: it belongs to a one-parameter affine family.  These approximations have the same exponential decay as the classical Apéry approximations and satisfy the same denominator-growth estimate needed in Beukers' irrationality argument.  We then apply the same construction to several other genus-zero Fricke groups.  
\end{abstract}

\maketitle

\section{Introduction}
The Riemann zeta function $\zeta(s)$ remains one of the most significant objects in analytic number theory, primarily due to its deep-rooted connection with the distribution of prime numbers. In the 18th century, Leonhard Euler established this link through his product formula and succeeded in calculating the values of $\zeta(s)$ at even integers. He demonstrated that $\zeta(2n) = (-1)^{n-1} \frac{(2\pi)^{2n} B_{2n}}{2(2n)!}$, where $B_{2n}$ are the Bernoulli numbers, proving that $\zeta(2n)$ is always a rational multiple of $\pi^{2n}$. Despite three centuries of subsequent research, the values of $\zeta(s)$ at odd integers remain notoriously difficult to characterize. While it is conjectured that these values are transcendental, and that $\pi, \zeta(3), \zeta(5), \ldots$ are algebraically independent over $\mathbb{Q}$, proving even their irrationality has been an open problem for years. 

A significant milestone was reached in 1978 when Roger Apéry proved the irrationality of $\zeta(3)$. His proof relied on Dirichlet's criterion, which states that a real number $\xi$ is irrational if there exist infinitely many coprime integers $p, q$ such that$$\left| \xi - \frac{p}{q} \right| < \frac{c}{q^{r}}$$for constants $c > 0$ and $r > 2$. Apéry constructed a sequence of rational approximations $a_n/b_n$ generated by a second-order recurrence relation, which converged rapidly enough to $\zeta(3)$ to satisfy the criterion. Following Apéry’s breakthrough, the search for irrationality proofs of higher odd zeta values turned toward the construction of linear forms in zeta values with rational coefficients. 

In 2000, Ball and Rivoal \cite{BallRivoal2001} introduced a core method using well-poised hypergeometric series of the form
\begin{align*}
    n!^{s-2r}\sum_{t=0}^\infty \frac{\prod_{j=1}^{(2r+1)n}(t-rn+j) }{ \prod_{j=0}^n (t+j)^{ s+1}},
\end{align*}
which are $\mathbb{Q}$-linear combinations of $1$ and odd zeta values, where $n$ is even and $s$ is odd. Together with Nesterenko's linear independence criterion, they proved that the dimension of the vector space spanned by $\{1, \zeta(3), \zeta(5), \ldots, \zeta(a)\}$ over $\mathbb{Q}$ grows logarithmically with $a$. This implies that infinitely many odd zeta values are irrational. Fischler, Sprang, and Zudilin \cite{Fischler2019} later established a lower bound for the density of $\operatorname{dim}\operatorname{Span}(1, \zeta(3), \zeta(5), \ldots, \zeta(s)).$ Shortly thereafter, Zudilin \cite{Zudilin2001, Zudilin2007} refined this method using well-poised hypergeometric series, allowing him to narrow the focus to specific sets; he proved, for instance, that at least one of the four values in the set $\{\zeta(5), \zeta(7), \zeta(9), \zeta(11)\}$ must be irrational. Although many developments have been made, it remains unknown whether $\zeta(s)$ is irrational for any odd integer $s > 3$.

The ``miraculous" nature of the recurrence relations used by Apéry was later given a geometric foundation by Fritz Beukers \cite{Beukers1979, Beukers1987}. He replaced Apéry's recurrences with explicit iterated integrals and Legendre polynomials. He also noted that if $A(t) = \sum_{n=0}^\infty a_nt^n,$ where $a_n$ are the Apéry numbers, then the recurrence relation they satisfy implies that $\sqrt{A}$ is a solution to the second-order equation
$$ (t^3 - 34t^2 + t)y'' + (2t^2 -51t +1)y' + \frac{1}{4}(t-10)y =0.$$
Changing variables according to the local exponents of this Fuchsian equation, one recovers precisely the Picard-Fuchs equation for the modular family of elliptic curves of $\Gamma_0(6).$ By reframing the problem in the language of modular forms, Beukers provided an alternative proof for the irrationality of $\zeta(2)$ and $\zeta(3)$.

 Yang \cite{Yang2004} explored the differential equations satisfied by modular forms of various weights, while Zagier \cite{Zagier2009} computationally searched for ``Apéry-like" sequences—solutions to certain recurrence equations possessing the required integral properties—and found several sporadic solutions parameterized by modular forms, as in the Beukers case. Yang, in \cite{Yang2008}, described a method to determine Apéry limits of differential equations with solutions given by modular forms. 

In the work that follows, we use this modular lens to explore the construction of rational approximations for $\zeta(3)$. Unlike approaches that focus primarily on the associated Picard-Fuchs equations, our analysis centers on the modular forms themselves and the geometry of the underlying groups $\Gamma_0(N)^\star$. We perform a detailed study of Beukers' method within $\Gamma_0(6)^\star$, demonstrating that the weight 2 modular form $E$---a central component of the construction---is not unique. By characterizing an entire family of such forms, we generate distinct sequences of rational approximations for $\zeta(3)$. Finally, we extend this investigation to other genus-zero groups to explore the limits of this method, specifically identifying the arithmetic and geometric obstructions that prevent these generalizations from yielding valid irrationality proofs.

The main contribution of this paper is twofold. First, we establish a new degree of freedom in the modular construction of Apéry-like sequences. We prove that Beukers’ weight two modular form belongs to a one-parameter affine family; specifically, for any $\alpha \in \mathbb{Q}$, the form $E_b(\tau)(1 + \alpha t_6(\tau))$ generates rational approximations to $\zeta(3)$ sharing the same exponential rate as the classical Apéry approximations. Second, we test the limits of this construction through experiments with other genus-zero Fricke groups. These results demonstrate that the analytic ``flexibility" of the affine family is ultimately constrained by a geometric rigidity: the decay of the linear forms is strictly governed by the first branch value of the Hauptmodul. This comparative analysis provides a clear explanation for why the level $6$ construction is uniquely successful, whereas analogous cases are obstructed by insufficient radii of convergence.

The remainder of this work is organized as follows. In Section 2, we establish the general framework for Beukers' modular approach to the irrationality of $\zeta(3)$. We identify the necessary geometric conditions on a congruence subgroup $\Gamma_0(N)$ and its extension $\Gamma_0(N)^\star$ that allow for the construction of Apéry-like sequences.  Section 3 provides a close re-examination of Beukers' proof in the context of the group $\Gamma_0(6)$. While the original proof relies on specific modular functions satisfying certain conditions, we demonstrate that there exists a larger family of modular functions for which the irrationality proof remains valid.  Finally, in Section 4, we conduct an experimental study of this method across various other congruence groups. By shifting the underlying geometry to groups such as $\Gamma_0(10)^\star, \Gamma_0(14)^\star$, and $\Gamma_0(15)^\star$, we generate new sequences of rational approximations that converge to $\zeta(3)$. We analyze the convergence rates of these sequences, showing that the ``speed" of the approximation is fundamentally determined by the location of the branch points of the Hauptmodul on the boundary of the fundamental domain. This section concludes with an investigation into why certain groups yield slower convergence.

 \section{Outline of the Method}

 The central strategy of this method, originally developed by Beukers, is to construct a sequence of rational approximations to $\zeta(3)$ by exploiting the arithmetic properties of modular forms on certain congruence subgroups. 

 A simplified step-by-step breakdown of the construction is as follows:

 \begin{itemize}
     \item \textbf{Find the Target Modular Form ($F$)}
         We start by searching for a specific modular form, $F(\tau)$, on a congruence subgroup. This form is chosen so that its $L$-function, when evaluated at a specific integer, exactly equals our target value, $\zeta(3)$.

     \item \textbf{Build the Eichler Integral ($f$)}
 Using our modular form $F(\tau)$, we construct a related function called an ``Eichler integral'', denoted by $f(\tau)$. Hecke's Lemma implies that we can connect the Fourier coefficients of $F(\tau)$ to its $L$-function, associated with $\zeta(3)$ in the previous step. This allows us to define a new function, $h(\tau) = f(\tau) - \zeta(3)$, which obeys a specific functional equation and acts as the foundation for our approximations.

     \item \textbf{Constructing a complementary modular form ($E$)}
     Here, we construct a second modular form, $E(\tau)$, which must have rational Fourier coefficients and transform in a way that perfectly complements the symmetries of $h(\tau)$.

     \item \textbf{Combine into a Generating Series}
   We multiply our two constructions together: $E(\tau)(f(\tau) - \zeta(3))$.
 Because their modularity properties complement each other, this product yields a well-behaved power series. The coefficients of this power series take the form $A_n - \zeta(3)B_n$, where $A_n$ and $B_n$ are rational numbers with controlled denominators.

     \item \textbf{Accelerate Convergence with a Hauptmodul ($t$)}
  To make these approximations converge faster, we consider a change in local variables. We replace the standard variable with a new local coordinate, $t(\tau)$, known as a Hauptmodul. By rewriting our power series in terms of $t(\tau)$, we generate a new sequence of rational approximations $a_n/b_n$.

     \item \textbf{Determine the Convergence Rate}
         The rate of convergence of the sequence depends entirely on the radius of convergence of this new series and the denominators of $a_n$ and $b_n$. We calculate the radius of convergence by examining the geometry of the underlying modular group—specifically, the location of the branch points (vertices) on the boundary of the fundamental domain.
     \end{itemize}

To formalize this strategy, we establish the necessary definitions. Let $\Gamma_0(N)$ be the standard congruence subgroup of $SL_2(\mathbb{Z})$ of level $N$, defined by:
$$ \Gamma_0(N) = \left\{ \begin{pmatrix} a & b \\ c & d \end{pmatrix} \in \operatorname{SL}_2(\mathbb{Z}); ~~~~~c \equiv 0 \operatorname{mod} N \right\}.$$

We are particularly interested in the extension of this group by the Fricke involution $W_N$, defined by the matrix $\begin{pmatrix} 0 & -1 \\ N & 0 \end{pmatrix}$. This involution acts on the upper half-plane $\mathbb{H}$ via the map $\tau \mapsto -1/(N\tau)$. We denote the extended group by $\Gamma_0(N)^\star$, which is the group generated by $\Gamma_0(N)$ and $W_N$. Let $M_k(\Gamma_0(N))$ denote the space of modular forms of weight $k$ on $\Gamma_0(N)$. To relate these forms to zeta values, we utilize the Mellin transform of the modular form, which yields its $L$-function. The following lemma, attributed to Hecke, provides the precise functional equation and analytic behavior required to link the Fourier coefficients of a modular form to the values of its $L$-series inside the critical strip.

The following lemma is a key ingredient for the rest of the proof. 

\begin{lemma}[Hecke lemma]
\label{HeckeLemma}
    Suppose that $F\in M_{k}(\Gamma_0(N))$ satisfies the functional equation
    $$
    F\left(-\frac{1}{N\tau}\right)=\epsilon (-i\sqrt{N}\tau)^{k}F(\tau)
    $$
    for some $\epsilon\in\{-1,1\}$, and that $F$ has the following Fourier expansion in a neighborhood of $i\infty$ 
    $$
    F(\tau)=\sum_{n=1}^{\infty}a_{n}q^{n}\,
    $$
    where $q := e^{2\pi i \tau}.$
    Define its Eichler integral as
    $$
    f(\tau)=\sum_{n=1}^{\infty}\frac{a_{n}}{n^{k-1}}q^{n}
    $$
    and its $L$-function $L(F,s)$ as
    $$
    L(F,s)=\sum_{n=1}^{\infty}\frac{a_{n}}{n^{s}}.
    $$
    Then, the function $h:\mathbb{H}\to \mathbb{C}$ given by
    \begin{equation}\label{hecke_lemma1}
        h(\tau)=f(\tau)-\sum_{0\le r<\left[\frac{k-2}{2}\right]}\frac{L(F,k-1-r)}{r!}(2\pi i \tau)^{r}
    \end{equation}
    satisfies the functional equation
    \begin{equation}\label{hecke_lemma2}
        h(\tau)-\mathcal{D}=(-1)^{k-1}\epsilon(-i\sqrt{N}\tau)^{k-2}h\left(-\frac{1}{N\tau}\right)
    \end{equation}
    where the constant $\mathcal{D}$ is given by
    \begin{equation}\label{hecke_lemma3}
        \mathcal{D}=\left\{\begin{array}{cc}
            0\,, & \mbox{ if }k\mbox{ is odd} \\
            \frac{L(F,\frac{k}{2})(2\pi i \tau)^{\frac{k}{2}-1}}{(\frac{k}{2}-1)!}\,, & \mbox{ if }k\mbox{ is even}
        \end{array}\right.
    \end{equation}
    and $L(F,\frac{k}{2})=0$ if $\epsilon=-1$.
\end{lemma}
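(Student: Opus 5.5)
The plan is to prove the functional equation first on the imaginary axis $\tau=iy$, $y>0$, using Mellin transforms, and then to extend it to all of $\mathbb{H}$ by analytic continuation: both sides of \eqref{hecke_lemma2} are holomorphic in $\tau$. There is one preliminary point. Since $a_0=0$, $F$ vanishes at $i\infty$. The hypothesis $F(-1/(N\tau))=\epsilon(-i\sqrt N\tau)^kF(\tau)$ then shows that $F(iy)$ also decays rapidly as $y\to0^+$. Hence the integral
$$\Lambda(s)=\int_0^\infty F(iy)\,y^{s-1}\,dy=(2\pi)^{-s}\Gamma(s)L(F,s)$$
converges for all $s$ and defines an entire function.

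Splitting the integral at $y=1/\sqrt N$ and substituting $y\mapsto 1/(Ny)$ gives Hecke's functional equation
$$\Lambda(s)=\epsilon\, i^{k}\,(\ldots)\,N^{k/2-s}\Lambda(k-s),$$
where the exact power of $i$ and the sign must be tracked carefully. This shows that $L(F,s)$ is entire. Evaluating at the centre $s=k/2$ shows that $L(F,k/2)=0$ when the sign is $-1$, i.e. when $\epsilon=-1$, as stated in the lemma. The same argument gives polynomial growth of $L(F,s)$ in vertical strips: the estimate is trivial for $\Re s$ large, follows from the functional equation for $\Re s$ very negative, and holds in between by Phragm\'en--Lindel\"of.

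Next, write the Eichler integral as an inverse Mellin transform. Since $n^{1-k}e^{-2\pi ny}$ has Mellin transform $(2\pi)^{-s}\Gamma(s)n^{1-k-s}$, for $c$ large we have
$$f(iy)=\frac{1}{2\pi i}\int_{(c)}(2\pi)^{-s}\Gamma(s)\,L(F,s+k-1)\,y^{-s}\,ds.$$
Move the line of integration to $\Re s=c'$ with $c'$ very negative, specifically $c'=2-k-c$ so that the new line is the reflection of the old one under $s\mapsto 2-k-s$. This shift is justified by Stirling's decay of $\Gamma$ together with the polynomial growth of $L$.

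The only poles crossed are those of $\Gamma(s)$ at $s=-r$ for $r=0,1,\dots,k-2$. Beyond $r=k-2$ the poles are cancelled by zeros of $L(F,s+k-1)$, which come from the poles of $\Gamma(k-s)$ via the functional equation. The residue at $s=-r$ is
$$\frac{(-1)^r}{r!}(2\pi y)^r L(F,k-1-r)=\frac{L(F,k-1-r)}{r!}(2\pi i\tau)^r\qquad(\tau=iy),$$
which are exactly the terms subtracted in \eqref{hecke_lemma1}.

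In the remaining integral on $\Re s=c'$, substitute $s\mapsto 2-k-s$ and apply the functional equation of $\Lambda$. The result is $(-1)^{k-1}\epsilon(-i\sqrt N\tau)^{k-2}f(-1/(N\tau))$; checking the sign and power of $i$ here is routine but error-prone. This yields an identity of the form
$$f(\tau)-\sum_{r=0}^{k-2}\frac{L(F,k-1-r)}{r!}(2\pi i\tau)^r=(-1)^{k-1}\epsilon(-i\sqrt N\tau)^{k-2}f\!\left(-\tfrac{1}{N\tau}\right).$$

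The final step is bookkeeping, which splits the residues symmetrically. The functional equation relates $L(F,k-1-r)$ to $L(F,r+1)$. Under $\tau\mapsto-1/(N\tau)$, the term indexed by $r$ multiplied by $(-i\sqrt N\tau)^{k-2}$ becomes the term indexed by $k-2-r$. So the residues with $r<(k-2)/2$ can be kept on the left, and the ones with $r>(k-2)/2$ rewritten as the polynomial part of $h(-1/(N\tau))$ on the right. This turns the identity above into \eqref{hecke_lemma2}. For $k$ even, the unpaired middle term $r=k/2-1$ survives and is exactly $\mathcal D$. For $k$ odd there is no middle term, so $\mathcal D=0$.

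I expect the main obstacle to be this sign and index bookkeeping: matching the factors $i^k$, $(-1)^r$ and $\epsilon$ so that the paired residues cancel exactly and leave the stated constant $(-1)^{k-1}\epsilon$. The range of $r$ in \eqref{hecke_lemma1} must also be interpreted consistently with this pairing, namely as $0\le r<\frac{k-2}{2}$. Analytically, the only delicate point is the growth estimate that justifies the contour shift, and I will handle it with Phragm\'en--Lindel\"of as described above.
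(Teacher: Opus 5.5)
Your argument is correct. It is the standard Mellin-transform proof: take the inverse Mellin transform of $f$, shift the contour past the $k-1$ poles of $(2\pi)^{k-1}\Lambda(s+k-1)/\bigl(s(s+1)\cdots(s+k-2)\bigr)$, reflect the remaining integral through the functional equation of $\Lambda$, and then split the residues $T_r(\tau)=\frac{L(F,k-1-r)}{r!}(2\pi i\tau)^r$ symmetrically. The paper gives no proof of its own and simply cites Weil, and your argument is the one behind that reference. Your reading of the range as $0\le r<\frac{k-2}{2}$ is also the right one.

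The bookkeeping you flagged closes cleanly:
\begin{itemize}
\item Since $(-i\sqrt N\cdot iy)^k=(\sqrt N y)^k>0$, the functional equation is exactly $\Lambda(s)=\epsilon N^{k/2-s}\Lambda(k-s)$, with no power of $i$.
\item From this one gets $(-1)^{k-1}\epsilon(-i\sqrt N\tau)^{k-2}\,T_{k-2-r}\bigl(-\frac{1}{N\tau}\bigr)=-T_r(\tau)$ for every $0\le r\le k-2$, which is exactly the pairing you need.
\end{itemize}

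One small slip: the trivial zeros of $L(F,s+k-1)$ come from the poles of $\Gamma(s+k-1)$ in $L=(2\pi)^{s}\Lambda/\Gamma$, not from the poles of $\Gamma(k-s)$.
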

A proof can be found at \cite{weil}.

The link between the congruence modular group and zeta values is given by the identity
\begin{equation}\label{LfunctionEisenstein}
    L(E_{k}(d\,\cdot\,),s)=-d^{-s}\frac{2k}{B_{k}}\zeta(s)\zeta(s-k+1)\,.
\end{equation}

\begin{lemma}\label{lemma:E2_modular_combinations}
For every integer $d>1$, the function
\begin{equation*}
    \Phi_d(\tau)=E_2(\tau)-dE_2(d\tau)
\end{equation*}
is a modular form of weight $2$ on $\Gamma_0(d)$ and has rational Fourier coefficients. Consequently, whenever $N$ is divisible by $d$, the same function belongs to $M_2(\Gamma_0(N))$.

More generally, if
\begin{equation*}
    E(\tau)=\sum_{d\mid N}\beta_d E_2(d\tau)
\end{equation*}
and
\begin{equation}\label{eq:E2_modularity_condition}
    \sum_{d\mid N}\frac{\beta_d}{d}=0,
\end{equation}
then $E$ is a weight two modular form on $\Gamma_0(N)$ with rational Fourier coefficients whenever the $\beta_d$ are rational.
\end{lemma}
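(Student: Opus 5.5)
The plan is to use the quasi-modular transformation law of $E_2(\tau)=1-24\sum_{n\ge1}\sigma_1(n)q^n$. For every $\gamma=\begin{pmatrix} a & b \\ c & d \end{pmatrix}\in \operatorname{SL}_2(\mathbb{Z})$,
\begin{equation*}
    E_2(\gamma\tau)=(c\tau+d)^2E_2(\tau)+\frac{6c}{\pi i}(c\tau+d),
\end{equation*}
which comes from differentiating the logarithm of $\Delta(\tau)=\eta(\tau)^{24}$ and using the modularity of $\Delta$. I will take this as standard. The first step is to find how $E_2(d\tau)$ transforms under $\gamma\in\Gamma_0(N)$ when $d\mid N$. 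Write $c=dc'$. Then $d\gamma\tau=\gamma'(d\tau)$, where $\gamma'=\begin{pmatrix} a & db \\ c' & d_\gamma \end{pmatrix}\in \operatorname{SL}_2(\mathbb{Z})$ and $d_\gamma$ is the lower-right entry of $\gamma$. Also $c'(d\tau)+d_\gamma=c\tau+d_\gamma$. Applying the law above gives
\begin{equation*}
    E_2(d\gamma\tau)=(c\tau+d_\gamma)^2E_2(d\tau)+\frac{6c}{\pi i\,d}(c\tau+d_\gamma).
\end{equation*}
So the anomalous term for $E_2(d\,\cdot)$ is exactly $1/d$ times that of $E_2$.

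The second step is to take the linear combination. Summing with coefficients $\beta_d$ gives
\begin{equation*}
    E(\gamma\tau)=(c\tau+d_\gamma)^2E(\tau)+\frac{6c}{\pi i}(c\tau+d_\gamma)\sum_{d\mid N}\frac{\beta_d}{d}.
\end{equation*}
The anomalous term vanishes precisely under condition \eqref{eq:E2_modularity_condition}. Hence $E$ has the weight-two transformation law on $\Gamma_0(N)$. The special case $\Phi_d$ corresponds to $\beta_1=1$, $\beta_d=-d$, with $N=d$. Since $\Gamma_0(N)\subset\Gamma_0(d)$ whenever $d\mid N$, the form $\Phi_d$ also lies in $M_2(\Gamma_0(N))$.

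The third step is holomorphy, including at the cusps. Each $E_2(d\tau)$ is holomorphic on $\mathbb{H}$. To check a cusp $\sigma(i\infty)$ with $\sigma\in\operatorname{SL}_2(\mathbb{Z})$, I write $d\sigma\tau=\delta(\tau')$ with $\delta\in \operatorname{SL}_2(\mathbb{Z})$ and $\tau'=(A\tau+B)/D$, where $AD=d$; this is the standard factorization $\begin{pmatrix} d & 0\\ 0 & 1\end{pmatrix}\sigma=\delta\begin{pmatrix} A & B\\ 0 & D\end{pmatrix}$. Then $(c_\sigma\tau+d_\sigma)^{-2}E(\sigma\tau)$ becomes a combination of $E_2(\tau')$-type $q$-expansions, plus terms polynomial in $\tau$ of the form $1/(\text{linear in }\tau)$. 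Those polynomial terms cancel by the same identity $\sum\beta_d/d=0$, because the anomaly of $E_2(d\,\cdot)$ under $\sigma$ carries the factor $c_\sigma/d$. What remains is a bounded $q$-expansion in fractional powers of $q$, which gives holomorphy at every cusp.

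I expect this cusp bookkeeping to be the main obstacle: one has to verify that the non-holomorphic anomaly really cancels after the factorization. To handle it cleanly, I would first pass to the nonholomorphic form $E_2^*(\tau)=E_2(\tau)-3/(\pi\,\mathrm{Im}\,\tau)$. This form is invariant of weight two under all of $\operatorname{SL}_2(\mathbb{Z})$. Then $E=\sum\beta_dE_2^*(d\tau)$ exactly, because the correction terms $-3/(\pi d\,\mathrm{Im}\,\tau)$ sum to zero. This makes modularity, and growth at each cusp, immediate from the behavior of $E_2^*$.

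Rationality of the coefficients is clear. The Fourier coefficients of $E_2(d\tau)$ are $1$ and $-24\sigma_1(n/d)$, where $\sigma_1(n/d)$ is taken to be $0$ when $d\nmid n$. These are integers, so a rational combination of them is rational.
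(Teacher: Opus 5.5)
Your proof is correct, but it organizes the argument differently from the paper.

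\textbf{The paper's route.} The paper takes the modularity of $\Phi_d$ as a direct consequence of the transformation law of $E_2$. It then treats the general case purely algebraically. Since the condition gives $\beta_1=-\sum_{d>1}\beta_d/d$, one has
\begin{equation*}
    \sum_{d\mid N}\beta_dE_2(d\tau)=-\sum_{d\mid N,\ d>1}\frac{\beta_d}{d}\,\Phi_d(\tau).
\end{equation*}
So $E$ inherits everything from the $\Phi_d$: the transformation law, holomorphy at the cusps, and rationality.

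\textbf{Your route.} You instead compute the anomaly of each $E_2(d\,\cdot)$ under $\Gamma_0(N)$, find it is $1/d$ times that of $E_2$, and sum. $\Phi_d$ then appears as the special case $\beta_1=1$, $\beta_d=-d$.

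\textbf{What each buys.} Your approach supplies exactly the computation the paper leaves implicit in the phrase ``usual transformation formula''. It also makes visible that $\sum_{d\mid N}\beta_d/d=0$ is necessary as well as sufficient: otherwise the leftover term $\frac{6c}{\pi i}(c\tau+d_\gamma)\sum_{d\mid N}\beta_d/d$ is nonzero for every $\gamma$ with $c\neq0$. In addition, your passage to $E_2^*$ handles the cusps, which the paper does not discuss at all. The paper's reduction is shorter and rests everything on the single standard fact that $\Phi_d\in M_2(\Gamma_0(d))$.

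One minor wording slip: the leftover terms at a cusp, $\frac{6c_\sigma}{\pi i\,d\,(c_\sigma\tau+d_\sigma)}$, are rational rather than polynomial in $\tau$. Your cancellation argument for them is right.
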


\begin{proof}
The modularity of $\Phi_d$ follows from the usual transformation formula for the quasimodular Eisenstein series $E_2$. If \eqref{eq:E2_modularity_condition} holds, then
\begin{equation*}
    \sum_{d\mid N}\beta_dE_2(d\tau)
    =
    -\sum_{\substack{d\mid N\\ d>1}}\frac{\beta_d}{d}\left(E_2(\tau)-dE_2(d\tau)\right),
\end{equation*}
so the given expression is a linear combination of the modular forms $\Phi_d$. The assertion about rational Fourier coefficients is immediate from the $q$-expansion of $E_2$.
\end{proof}

In what follows, the notation $\sum_{d\mid N}\beta_dE_2(d\tau)$ in weight two is only a convenient coefficient parametrization. The individual functions $E_2(d\tau)$ are not being used as a basis of $M_2(\Gamma_0(N))$; modularity is always supplied either by condition \eqref{eq:E2_modularity_condition} or, equivalently, by rewriting the expression in terms of the modular combinations $\Phi_d$.

\begin{lemma}
\label{F:lemma}

Let $N$ be a positive integer and let $k \ge 4$ be an even integer. Suppose that $N$ is square-free and has exactly $k$ divisors. Then, there exists a unique $F \in \mathcal{E}_k(N)$ satisfying
\begin{enumerate}
    \item $F(-1/N\tau) = - \left(-i \sqrt{N} \tau \right)^k F(\tau)$
    \item $F(i\infty) = 0$
    \item $L(F, k-1) = \zeta(k-1)$
    \item $L(F, \ell) =0 $ for $ \frac{k+2}{2} \le \ell \le k-2.$
\end{enumerate}
\end{lemma}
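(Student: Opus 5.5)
The plan is to reduce everything to a square linear system in the coefficients of $F$ with respect to the Eisenstein basis, and then show that this system is nonsingular. Since $N$ is square-free with exactly $k$ divisors, I take $\mathcal{E}_k(N)$ to be spanned by the $k$ forms $E_k(d\tau)$, $d\mid N$. These are linearly independent, with one for each cusp of $\Gamma_0(N)$. I therefore write
\[
F(\tau)=\sum_{d\mid N}\beta_d E_k(d\tau),
\]
and translate conditions (1)--(4) into linear conditions on the vector $(\beta_d)_{d\mid N}$.

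\emph{Condition (1).} Using $E_k(-1/\tau)=\tau^kE_k(\tau)$, I get
\[
E_k\!\left(d\cdot\tfrac{-1}{N\tau}\right)=\left(\tfrac{N}{d}\right)^k\tau^k E_k\!\left(\tfrac{N}{d}\tau\right).
\]
So (1) is equivalent to
\[
\beta_d\left(\tfrac{N}{d}\right)^k=-(-1)^{k/2}N^{k/2}\beta_{N/d}\quad\text{for all } d\mid N.
\]
Since $N$ is square-free and $N>1$, it is not a square, so $d\neq N/d$ always. The divisors therefore split into $k/2$ pairs $\{d,N/d\}$, and (1) expresses $\beta_{N/d}$ in terms of $\beta_d$. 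This leaves $k/2$ free unknowns, say $\beta_d$ for $d<\sqrt N$.

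\emph{Conditions (2)--(4).} Condition (2) is the single linear equation $\sum_d\beta_d=0$. For (3) and (4), the identity \eqref{LfunctionEisenstein} gives
\[
L(F,s)=-\frac{2k}{B_k}\,\zeta(s)\zeta(s-k+1)\,D(s),\qquad D(s)=\sum_{d\mid N}\beta_d d^{-s}.
\]
At $s=k-1$ we have $\zeta(0)=-1/2$, so (3) becomes the inhomogeneous normalization $D(k-1)=B_k/k$.

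For $\tfrac{k+2}{2}\le\ell\le k-2$, the factor $\zeta(\ell)$ is nonzero, and $\ell-k+1$ is a negative integer. That integer is even precisely when $\ell$ is odd, and in that case $\zeta(\ell-k+1)=0$ is a trivial zero. Hence (4) is automatic for odd $\ell$. For even $\ell$ it reduces to $D(\ell)=0$, because $\zeta(\ell-k+1)\ne0$ at negative odd integers.

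I will then use the relation from (1), which turns into the functional equation
\[
D(k-s)=\pm N^{k/2-s}D(s)
\]
up to the explicit sign. Through this relation, the conditions (2), $D(k-1)$ and the $D(\ell)$ become conditions on the $k/2$ free unknowns.

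The next step is to check that the number of independent conditions is exactly $k/2$: the single normalization (3), the equation (2), and the remaining nontrivial vanishing conditions. One has to be careful to discard conditions that the functional equation makes either automatic or equivalent to one another. Once the count matches, existence and uniqueness follow together from the nonvanishing of a single determinant.

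\emph{Main obstacle.} I expect the hardest step to be showing that this $k/2\times k/2$ determinant is nonzero. The matrix has entries of the form $d^{-\ell}\pm(N/d)^{-\ell}\cdot(\text{power of }N)$, where $d<\sqrt N$ and $\ell$ runs over the relevant exponents (with $\ell=0$ corresponding to the constant term). My first attempt is to factor out powers of $N$ so that each column becomes a symmetric combination $x_d^{j}\mp x_d^{-j}$ with $x_d=d/\sqrt N$. This turns the matrix into a generalized Vandermonde/Chebyshev-type matrix in the distinct positive reals $x_d+x_d^{-1}$, and I would then use that $d\mapsto x_d+x_d^{-1}$ is injective on $d<\sqrt N$. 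If that fails, the fallback is to exploit the multiplicative structure of the square-free $N$: make the ansatz $D(s)=\prod_{p\mid N}(1+c_p p^{-s})$, solve the resulting system explicitly, and obtain existence that way. Uniqueness would then still require the determinant argument.
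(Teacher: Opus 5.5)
Your translation of (1)--(4) into conditions on $D(s)=\sum_{d\mid N}\beta_d d^{-s}$ is correct, apart from one slip. The functional equation is $D(k-s)=-(-1)^{k/2}N^{s-k/2}D(s)$, not one with $N^{k/2-s}$. The real problem is the step ``check that the number of independent conditions is exactly $k/2$'': it fails for every $k\ge 8$, and no determinant can repair it.

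Because $k$ is a power of $2$, $\frac{k+2}{2}$ is odd. So after your (correct) trivial-zero observation, condition (4) contributes only the $(k-4)/4$ even values $\ell=\frac k2+2,\dots,k-2$. Together with (2) and (3), this gives $(k+4)/4$ linear conditions on the $k/2$ unknowns $\beta_d$, $d<\sqrt N$, that remain after the Fricke pairing. Since $(k+4)/4<k/2$ once $k\ge 8$, the system is underdetermined. The conditions are independent (they are rows of the nonsingular matrix below) and consistent. Hence the $F$ satisfying (1)--(4) form an affine space of dimension $(k-4)/4\ge 1$.

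For example, take $k=8$, $N=30$. Then $L(F,5)=-\frac{16}{B_8}\zeta(5)\zeta(-2)D(5)=0$ for every $F$, so $D(5)$ (equivalently $D(3)$) is left free. Uniqueness, read literally, is therefore false for $k\ge 8$. Your plan closes only for $k=4$, where (4) is vacuous and the two conditions $D(0)=0$, $D(3)=B_4/4$ act on two unknowns.

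The paper gets a square $\frac k2\times\frac k2$ system by imposing $\sum_{d\mid N}\beta_d d^{-\ell}=0$ for \emph{every} $\ell$ in the range, odd ones included. That is vanishing of the Dirichlet polynomial $D(\ell)$, not of $L(F,\ell)$. The paper only claims that this system implies (1)--(4), so its uniqueness is uniqueness for the strengthened system.

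If you adopt the same strengthening (or stay at $k=4$), your Chebyshev idea does finish the argument, by a route different from the paper's:
\begin{itemize}
\item Set $x_d=d/\sqrt N\in(0,1)$. After the functional equation, the strengthened system prescribes $D(s)$ for $s=0,1,\dots,\frac k2-1$.
\item The paired coefficient of $\beta_d$ in $D(s)$ is $N^{-s/2}x_d^{-k/2}\bigl(x_d^{j}-x_d^{-j}\bigr)$ with $j=\frac k2-s$.
\item Since $(x^j-x^{-j})/(x-x^{-1})$ is a monic polynomial of degree $j-1$ in $x+x^{-1}$, the determinant is $\prod_d(x_d-x_d^{-1})$ times a Vandermonde determinant in the numbers $x_d+x_d^{-1}$.
\item These numbers are distinct because $x\mapsto x+x^{-1}$ is injective on $(0,1)$, and each $x_d-x_d^{-1}\neq 0$.
\end{itemize}

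The paper instead counts roots. Write $n=k/2$ and take a combination $P$ of $p_r(x)=x^r-N^{r-n}x^{2n-r}$ ($1\le r\le n-1$) and $p_n(x)=x^{2n}-N^n$ that vanishes at the $d<\sqrt N$. By $x^{2n}p_r(N/x)=-N^np_r(x)$, $P$ also vanishes at the $N/d$. Comparing constant terms with $\prod_{d\mid N}(x-d)$ then forces $P=0$. Your version gives the determinant in closed form; the paper's needs no computation.
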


\begin{proof}
We first note that for $N$ square-free, a basis of Eisenstein series of weight $k$ of $\Gamma_0(N)$ is given by
$$ \{ E_k(d\cdot ) ; ~~~d | N \},$$
(\cite{Miyake2006}),
where $E_k$ is the Eisenstein series of weight $k$.

Therefore, we let
    $$ F(\tau) = \sum_{d| N} \alpha_d E_k(d\cdot \tau),$$
    where $\alpha_d$ satisfies the following linear system:
\begin{align}
    \label{matrix_0}
    \sum_{d|N} \alpha_d &= 0 \\
    \sum_{d|N} \frac{\alpha_d}{d^{k-1}} & = \frac{\operatorname{B}_k}{k}  \\
    \label{matrix_1}
    \alpha_{N/d} &= -\frac{i^k N^{k/2}}{d^k}\alpha_d \\
    \label{matrix_2}
     \sum_{d|N} \frac{\alpha_d}{d^\ell}& = 0, \text{     for      } \frac{k+2}{2} \le \ell \le k-2.
\end{align}

Observe that the system relations imply conditions (1)--(4) for $F$.
The central value $L(F,k/2)$ vanishes automatically from the Fricke sign in Hecke's functional equation, so it is not imposed as an independent condition. Therefore, we will show that the associated matrix has full rank, ensuring that the linear system has a unique solution.

Let $n=k/2$ and let $d_1,\ldots,d_n$ be the divisors of $N$ smaller than $\sqrt{N}$. Since $N$ is square-free and has exactly $k$ divisors, $k$ is a power of $2$; in particular, $i^k=1$. The Fricke relation \eqref{matrix_1} gives
\begin{equation*}
    \alpha_{N/d}=-\frac{N^n}{d^{2n}}\alpha_d .
\end{equation*}
Hence, for any $s$, the paired contribution to the sum $\sum_{d\mid N}\alpha_d/d^s$ is
\begin{equation}\label{eq:paired_L_condition}
    \left(\frac{1}{d^s}-\frac{N^{n-s}}{d^{2n-s}}\right)\alpha_d .
\end{equation}
Equivalently,  for $s=k-l-1=2n-l-1$, this paired coefficient is
\begin{equation*}
    \left(\frac{1}{d^{2n-l-1}}-\frac{N^{l+1-n}}{d^{l+1}}\right)\alpha_d,
\end{equation*}
and after clearing denominators by multiplying the column indexed by $d$ by $d^{2n}=d^k$, it becomes
\begin{equation*}
    \left(d^{l+1}-N^{l+1-n}d^{2n-l-1}\right)\alpha_d .
\end{equation*}
Note that the coefficients are never $0$, as $N$ is square-free. Thus the equation $F(i\infty)=0$ is the case $s=0$, the normalization $L(F,k-1)=\zeta(k-1)$ is the case $s=k-1$, and the vanishing conditions in \eqref{matrix_2} correspond to $s=\ell$, with $n+1\le \ell\le 2n-2$.

After multiplying the column indexed by $d_j$ by the non-zero factor $d_j^{2n}$, the relevant coefficient matrix has the form
\begin{equation*}
\begin{pmatrix}
d_1-N^{1-n}d_1^{2n-1} & \cdots & d_n-N^{1-n}d_n^{2n-1}\\
d_1^2-N^{2-n}d_1^{2n-2} & \cdots & d_n^2-N^{2-n}d_n^{2n-2}\\
\vdots & \ddots & \vdots\\
d_1^{n-1}-N^{-1}d_1^{n+1} & \cdots & d_n^{n-1}-N^{-1}d_n^{n+1}\\
d_1^{2n}-N^n & \cdots & d_n^{2n}-N^n
\end{pmatrix}.
\end{equation*}
For brevity, denote the $r$-th row polynomial by
\begin{equation*}
    p_r(x)=x^r-N^{r-n}x^{2n-r}\quad (1\le r\le n-1),
    \qquad
    p_n(x)=x^{2n}-N^n .
\end{equation*}
It remains to prove that this matrix is non-singular. Suppose that a linear combination
\begin{equation*}
    P(x)=\sum_{r=1}^{n}u_r p_r(x)
\end{equation*}
vanishes at each divisor $d_j<\sqrt N$. The polynomials $p_r$ satisfy
\begin{equation*}
    x^{2n}p_r(N/x)=-N^np_r(x),
\end{equation*}
so $P$ also vanishes at the paired divisors $N/d_j>\sqrt N$. Hence $P$ vanishes at all $2n=k$ divisors of $N$. If $\deg P<2n$, then $P=0$; if $\deg P=2n$, then $P$ would be a non-zero multiple of $\prod_{d\mid N}(x-d)$. This is impossible because the leading coefficient of $P$ is $u_n$, while its constant coefficient is $-N^nu_n$, whereas $\prod_{d\mid N}(x-d)$ has constant coefficient $N^n$. Therefore $P=0$. Since the polynomials $p_r$ have distinct leading degrees, all $u_r$ are zero, proving that the matrix has full rank and ensuring a unique solution to the linear system.

\end{proof}

We observe that a function $F$ as in the previous lemma has rational Fourier coefficients and satisfies all the hypothesis of Hecke's Lemma. Thus, if we let $f(\tau) = \sum_{n=1}^\infty \frac{a_n}{n^{k-1}} q^n,$ then
$h(\tau) = f(\tau)  - \zeta(k-1) $ satisfies
\begin{align*}
   h(\tau) = (-1)^k(-i \sqrt{N}\tau)^{k-2} h(-1/N\tau).
\end{align*}

The next step to obtain rational approximations for $\zeta(k-1)$ is to construct a modular form $E(\tau)$ in $\Gamma_0(N)$, of weight $k-2$ that also satisfies
\begin{itemize}
    \item $E(-1/N\tau) = (-1)^k(-i \sqrt{N}\tau)^{k-2}E(\tau)$
    \item $E(i\infty) = 1.$
    \item The Fourier expansion of $E$ at $i\infty$ are rational numbers
\end{itemize}
If such function can be constructed, then the following function equation will hold


\begin{equation}\label{fricke_invariance}
    E\left(-\frac{1}{N\tau}\right)\left(f\left(-\frac{1}{N\tau}\right)-\zeta(k-1)\right)=E(\tau)(f(\tau)-\zeta(k-1)).
\end{equation}

Note from equation (\ref{fricke_invariance}) that expanding in Fourier coefficients, we have
\begin{equation}\label{coeff_fourier}
    E(\tau)(f(\tau)-\zeta(k-1))=\sum_{n=0}^{\infty}(A_n-\zeta(k-1)B_n)q^{n},
\end{equation}
therefore, we could compute the radius of convergence of this series, and potentially conclude a rate of convergence for $|A_n - \zeta(k-1)B_n| \rightarrow 0.$

In a similar fashion as we did for the construction of $F$, we observe that a family of functions $E$ can be constructed:
\begin{lemma}
\label{E:lemma}
    Let $N$ be a positive integer and let $k \ge 4$ be an even integer. Suppose that $N$ is square-free and has exactly $k$ divisors. Then, there exists an affine space of dimension $k/2 - 1$ in $\mathcal{E}_{k-2}(N)$ of functions $E$ satisfying
    \begin{align*}
        &E(-1/N\tau) = (-1)^k(-i \sqrt{N}\tau)^{k-2}E(\tau) \\
    &E(i\infty) = 1.
    \end{align*}
\end{lemma}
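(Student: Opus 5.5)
The plan is to mimic the proof of Lemma~\ref{F:lemma}: write $E$ in the Eisenstein basis and reduce the two conditions to linear equations on the coefficients. Since $N$ is square-free, for weight $k-2\ge 4$ the space $\mathcal{E}_{k-2}(N)$ has basis $\{E_{k-2}(d\,\cdot\,): d\mid N\}$, so we put
\begin{equation*}
E(\tau)=\sum_{d\mid N}\beta_d E_{k-2}(d\tau).
\end{equation*}
When $k=4$ (weight two) we instead use the parametrization of Lemma~\ref{lemma:E2_modular_combinations}, with the extra linear constraint $\sum_{d\mid N}\beta_d/d=0$. The Fricke condition is then tested termwise. From $E_{k-2}(-1/\tau)=\tau^{k-2}E_{k-2}(\tau)$ we get
\begin{equation*}
E_{k-2}\!\left(d\cdot\frac{-1}{N\tau}\right)=\left(\frac{N\tau}{d}\right)^{k-2}E_{k-2}\!\left(\frac{N}{d}\tau\right).
\end{equation*}
Because $k$ is a power of two, $(-i)^{k-2}=-1$ and $(-1)^k=1$. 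Matching coefficients of the basis element $E_{k-2}((N/d)\tau)$ therefore turns the required relation into the pairing condition
\begin{equation*}
\beta_{N/d}=-\frac{N^{(k-2)/2}}{d^{\,k-2}}\,\beta_d ,\qquad d\mid N .
\end{equation*}
No divisor satisfies $d=N/d$, since $N$ is square-free and not a square. So the $k$ divisors split into $k/2$ pairs $\{d,N/d\}$ with $d<\sqrt N$. The pairing condition leaves exactly $k/2$ free parameters $\beta_{d_1},\dots,\beta_{d_{k/2}}$, and each $\beta_d$ with $d>\sqrt N$ is determined by its partner.

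Next we impose $E(i\infty)=1$. Since $E_{k-2}(i\infty)=1$, this reads $\sum_{d\mid N}\beta_d=1$, and in terms of the free parameters it becomes
\begin{equation*}
\sum_{j=1}^{k/2}\Bigl(1-\frac{N^{(k-2)/2}}{d_j^{\,k-2}}\Bigr)\beta_{d_j}=1 .
\end{equation*}
This is a single affine equation in $k/2$ unknowns. Its solution set is an affine space of dimension $k/2-1$ as soon as one coefficient is non-zero. In fact every coefficient is non-zero, because $d_j<\sqrt N$ gives $d_j^{k-2}<N^{(k-2)/2}$. The same computation shows the space consists of forms with rational coefficients when we take rational parameters. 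This gives the statement for $k\ge 6$.

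The main obstacle is the weight-two case $k=4$. There the basis elements $E_2(d\tau)$ are only quasimodular, and the extra constraint $\sum_d\beta_d/d=0$ could cut the dimension from $1$ to $0$. I would check that this constraint is automatic under the Fricke pairing. With $k=4$ the pairing reads $\beta_{N/d}=-N\beta_d/d^2$, so the $N/d$ term contributes
\begin{equation*}
\frac{\beta_{N/d}}{N/d}=-\frac{\beta_d}{d}.
\end{equation*}
The paired contributions therefore cancel, and $\sum_d\beta_d/d=0$ holds for free. One must also confirm that the termwise Fricke computation is legitimate in weight two. The non-modular correction terms of $E_2(d\tau)$ under $\tau\mapsto -1/(N\tau)$ are proportional to $\sum_d\beta_d/d$ times a fixed function of $\tau$, so they vanish by the same cancellation. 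Alternatively, one can rewrite $E$ via the $\Phi_d$ of Lemma~\ref{lemma:E2_modular_combinations} and verify the Fricke relation directly. This leaves the single affine condition, hence a one-dimensional affine family (the family containing Beukers' form for $N=6$), as claimed.
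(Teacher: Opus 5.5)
Your proposal is correct and follows essentially the same route as the paper. You expand in the Eisenstein basis, or for $k=4$ in the auxiliary $E_2(d\tau)$ coefficient space, and convert the Fricke condition into the pairing $\beta_{N/d}=-N^{(k-2)/2}d^{-(k-2)}\beta_d$; in weight two you note that this pairing forces $\sum_d\beta_d/d=0$, so the quasimodular correction cancels, and you then impose the single normalization $\sum_d\beta_d=1$. One small improvement over the paper: you observe that each coefficient $1-N^{(k-2)/2}/d_j^{\,k-2}$ is nonzero, which directly gives both non-emptiness and independence, whereas the paper appeals to the Vandermonde argument of Lemma~\ref{F:lemma}.
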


\begin{proof}
For $k>4$, write $E(\tau)=\sum_{d\mid N}\beta_d E_{k-2}(d\tau)$ and impose the Fricke relation and the normalization $E(i\infty)=1$. Since $N$ is square-free, the Fricke involution pairs the divisors $d$ and $N/d$, so the Fricke relation gives $k/2$ independent linear relations among the $k$ coefficients $\beta_d$. The condition at $i\infty$ imposes one further independent affine relation. Hence the solution space, when non-empty, is affine of dimension $k-(k/2)-1=k/2-1$. Non-emptiness follows by solving these relations on the Eisenstein basis $\{E_{k-2}(d\tau):d\mid N\}$; the same Vandermonde argument used in Lemma \ref{F:lemma} shows that the imposed relations have the stated rank.

For the weight two case $k=4$, one must not regard the functions $E_2(d\tau)$ themselves as a modular basis. Instead, use them only as an auxiliary coefficient space and write
\begin{equation*}
    E(\tau)=\sum_{d\mid N}\beta_dE_2(d\tau).
\end{equation*}
The transformation formula for $E_2$ gives
\begin{equation*}
    E_2\left(-\frac{d}{N\tau}\right)
    =\left(\frac{N}{d}\tau\right)^2E_2\left(\frac{N}{d}\tau\right)
    +\frac{6}{\pi i}\frac{N}{d}\tau .
\end{equation*}
Thus the Fricke relation
\begin{equation*}
    E\left(-\frac1{N\tau}\right)=(-i\sqrt N\tau)^2E(\tau)
\end{equation*}
is equivalent, after comparing the coefficients of the functions $E_2(d\tau)$, to the paired relations
\begin{equation}\label{eq:E2_fricke_coefficients}
    \beta_{N/d}=-\frac{N}{d^2}\beta_d,\qquad d\mid N.
\end{equation}
These relations automatically imply
\begin{equation*}
    \sum_{d\mid N}\frac{\beta_d}{d}=0,
\end{equation*}
because each pair $d,N/d$ contributes zero. Hence every solution of the Fricke equations is in fact modular by Lemma \ref{lemma:E2_modular_combinations}. Since $N$ is square-free with four divisors in this case, the relations \eqref{eq:E2_fricke_coefficients} give two independent homogeneous linear conditions on the four coefficients, and the normalization $E(i\infty)=\sum_{d\mid N}\beta_d=1$ gives one further independent affine condition. The resulting solution space is therefore affine of dimension $4-2-1=1$, as claimed.
\end{proof}

Therefore, in order to understand the convergence of $A_n/B_n$ to $\zeta(k-1)$ from relation (\ref{coeff_fourier}), the radius of convergence of the series at $\infty$ needs to be studied.

The next step in this strategy is to consider a change in local variables that could speed up the convergence rate of the series. To do so, we consider $t$ a Hauptmodul of $\Gamma_0(N)^\star$ and analyze the following identity:
\begin{align}
\label{series:parametrizes}
    E(\tau)(f(\tau) - \zeta(k-1)) = \sum_{n=0}^\infty (a_n - \zeta(k-1)b_n) t(\tau)^n.
\end{align}
This approach is closely linked to the recurrence relations satisfied by Apéry numbers and the associated differential equations for modular forms under the uniformizer $t(\tau)$. The following lemma connects the radius of convergence of our series to the branching points of the modular group. We provide a proof adapted from \cite{Panzone2018} to illustrate how the modularity in \eqref{fricke_invariance} dictates the convergence rate.

\begin{lemma}
\label{lemma:branching}
    Suppose that $G = \Gamma_0(N)^\star$ is a genus zero group and that $N \ge 4$ is square-free. Let $t$ be the Hauptmodul of $G$ such that $t(i\infty) = 0$, and let $E(t(\tau))$ and $E(t(\tau))(f(t(\tau))-\zeta(k-1))$ be the holomorphic functions obtained above viewed as holomorphic function of the variable $t$. If we let $i\infty = c_0, c_1=i/\sqrt{N}, \ldots, c_l$ be the vertices in the fundamental domain of $G$, which corresponds to branching points, cusps or elliptic points, then, the radius of convergence of $E(t(\tau))$ is $\min_{1 \le i \le l}|t(c_i)|$ and the radius of convergence of $E(t(\tau))(f(t(\tau))-\zeta(k-1))$  at least $\min_{2 \le i \le l} |t(c_i)|.$ 
\end{lemma}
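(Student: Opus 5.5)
The plan is to treat $E$ and $\Psi(\tau):=E(\tau)(f(\tau)-\zeta(k-1))$ as multivalued functions of the Hauptmodul $t$ on the Riemann sphere $X(G)\cong\mathbb{P}^1$, and to locate the singularities of their branches through $t=0$. The expansions in powers of $t$ converge on the largest disc on which these branches continue analytically. Since $t$ is a Hauptmodul, $\tau\mapsto t(\tau)$ identifies the fundamental domain of $G$ (glued along its sides) with $\mathbb{P}^1$. Hence $t$ is a local biholomorphism on $\mathbb{H}$ away from the $G$-orbits of the elliptic points and cusps. Near $c_0=i\infty$ we have $t=q+O(q^2)$, so $q$, and therefore $\tau$, is a holomorphic function of $t$ near $0$.

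First I continue the local inverse $\tau(t)$ analytically along paths in $\mathbb{P}^1\setminus\{t(c_1),\dots,t(c_l)\}$. Over this punctured sphere, $t:\mathbb{H}^{\circ}\to\mathbb{P}^1\setminus\{t(c_i)\}$ is an unramified covering, where $\mathbb{H}^{\circ}$ is $\mathbb{H}$ minus the elliptic orbits. So $\tau(t)$ continues along every path, and on the disc $|t|<\min_{i\ge1}|t(c_i)|$, which is simply connected, it gives a single-valued holomorphic branch. Composing with the holomorphic function $E$ on $\mathbb{H}$ shows that $E(t)$ is holomorphic on this disc, so the radius of convergence is at least $\min_{i\ge 1}|t(c_i)|$.

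For the reverse inequality for $E$, I would show that the point $t(c_j)$ of minimal modulus is a genuine singularity. At an elliptic point $c$ of order $e$, the map $t-t(c)$ behaves like $(\tau-c)^e$ in the local disc coordinate. So $E$, being weight $k-2$ and not identically vanishing there in the generic case, behaves like a fractional power $(t-t(c))^{m/e}$ with $m/e\notin\mathbb{Z}$. At a cusp, $E$ grows logarithmically or like a power of $\log$ in $t$ (weight $>0$ forms are not single-valued in the Hauptmodul near a cusp). By Pringsheim-type reasoning, a function holomorphic on a disc with a non-removable singularity on the boundary has exactly that radius. This step, verifying that the nearest branch point really obstructs $E$, is where I expect the main obstacle. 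One needs a weight argument: a nonzero weight-$w$ form cannot be a holomorphic function of $t$ near a fixed point of an elliptic element, because of the factor $(c\tau+d)^w$ in the transformation. I would handle this by comparing orders of vanishing via the valence formula.

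For $\Psi$, the point $c_1=i/\sqrt N$ is the fixed point of $W_N$, so $t$ has a simple branch point there: $t-t(c_1)\sim(\tau-c_1)^2$. The key input is \eqref{fricke_invariance}, which says that $\Psi$ is invariant, with weight $0$, under $W_N$. Therefore $\Psi$ is an even function of the local uniformizer $(\tau-c_1)$ after the standard Cayley-type normalization. Hence $\Psi$ is a holomorphic function of $t-t(c_1)$ near $c_1$, and the singularity at $t(c_1)$ disappears. Note that $f$ is only $\Gamma_0(N)$-invariant up to period polynomials, but since $k-2$ is the weight of $E$ and $\Psi$ is built from the Hecke functional equation with $\mathcal D=0$, the combination is genuinely invariant. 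The remaining singularities are at $t(c_i)$ with $i\ge2$. Repeating the disc-continuation argument on $|t|<\min_{i\ge2}|t(c_i)|$, now with $t(c_1)$ removable, gives the radius lower bound for $\Psi$.
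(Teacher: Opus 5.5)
Your strategy is the paper's: you view $E$ and $\Psi=E\,(f-\zeta(k-1))$ as multivalued functions of $t$, read the radius off the nearest branch value, and use \eqref{fricke_invariance} to remove $t(c_1)$. You give much more detail than the paper's two-line sketch.

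Two smaller corrections first. $\tau$ is \emph{not} holomorphic in $t$ at $t=0$, since it equals $\frac{1}{2\pi i}\log q$. The value $t=0$ is the image of the cusp $c_0$ and must also be deleted from the base of your covering. So $\tau(t)$ is not single-valued on $|t|<\min_{i\ge1}|t(c_i)|$: a loop around $0$ acts by $T:\tau\mapsto\tau+1$. The step survives because $E$ and $\Psi$ are $T$-invariant, hence single-valued on the punctured disc and bounded at $0$; equivalently, continue $q(t)$ rather than $\tau(t)$. Also, the converse inequality for $E$ needs no valence formula and no ``generic case''. If the branch were holomorphic at a boundary value $t(c_j)$, then $E\circ\gamma=E$ for the local monodromy $\gamma$ (elliptic, or parabolic not fixing $\infty$). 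But $E\circ\gamma=\pm j(\gamma,\cdot)^{k-2}E$ with $j(\gamma,\cdot)$ non-constant, so $E\equiv 0$. The ``pure fractional power'' picture is inaccurate: at $c_1$, $(\tau-\bar c_1)^{k-2}E$ is actually $W_N$-invariant, and the branching comes from the prefactor $(\tau-\bar c_1)^{2-k}$.

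The genuine gap is in the step for $\Psi$, which is not invariant under $\Gamma_0(N)$. Let $p_\gamma:=j(\gamma,\cdot)^{k-2}f\circ\gamma-f$ be the period polynomial. Then $\Psi(\gamma\tau)-\Psi(\tau)=E(\tau)\bigl(p_\gamma(\tau)+\zeta(k-1)(1-j(\gamma,\tau)^{k-2})\bigr)$, so $\Psi$ is invariant only under $\langle T,W_N\rangle$. If it were $G$-invariant, every elliptic value would be removable, and for $N=6$ the singularity at $(\sqrt2+1)^4$ that produces Apéry's exact rate would disappear.

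Your remark that the combination is ``genuinely invariant'' is therefore false beyond $W_N$. Evenness of $\Psi$ in the local coordinate at $i/\sqrt N$ removes the singularity only for a branch of $\tau(t)$ that actually arrives at $i/\sqrt N$ modulo $T$. A branch arriving at $g\,c_1$ has local monodromy $gW_Ng^{-1}$, which need not fix $\Psi$. Assume $|t(c_1)|<\min_{i\ge2}|t(c_i)|$; otherwise your first step already suffices. You must then show that the monodromy of your branch over that disc minus $\{0,t(c_1)\}$ is generated by $T$ and $W_N$.

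One way to do this:
\begin{itemize}
\item For $g\in G$ not a translation, $\operatorname{Im}(g\tau)\le 1/(N\operatorname{Im}\tau)$. After normalizing to determinant $1$, the lower-left entry is a non-zero multiple of $N$ for $g\in\Gamma_0(N)$, and $\sqrt N$ times a non-zero integer in the Fricke coset.
\item Hence the points $iy$ with $y\ge 1/\sqrt N$ are pairwise $G$-inequivalent and, apart from $c_1$, non-elliptic.
\item Since $t$ has real coefficients (true for all the eta quotients used), $y\mapsto t(iy)$ is injective and real. It therefore runs monotonically along the real segment from $0$ to $t(c_1)$, inside the disc.
\item Build the loop around $t(c_1)$ on this segment. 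Its lift runs down the imaginary axis and makes a half-turn about $i/\sqrt N$, so its monodromy is exactly $W_N$.
\item This loop and the loop around $0$ generate $\pi_1$ of the twice-punctured disc.
\end{itemize}
Then $\Psi$ is single-valued there and bounded near both punctures, which gives the bound. The paper's proof leaves this point implicit as well.
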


\begin{proof}
    As a multivalued function of the parameter $t$, the radius of convergence of $E(t(\tau))$ is determined by the nearest singularity or branch point $|t(c_i)|$. Consequently, this radius is given by $\min_{1 \le i \le l}|t(c_i)|$. The functional equation \eqref{fricke_invariance} ensures that the product $E(t(\tau))(f(t(\tau))-\zeta(k-1))$ is regular at the point $c_1=i/\sqrt{N}$, effectively regularizing the potential singularity at that point. It follows that the radius of convergence for this series is at least $\min_{2 \le i \le l} |t(c_i)|$.
\end{proof}

\section{\texorpdfstring{$\Gamma_0(6)$}{Gamma0(6)} and infinitely many rational approximations}

In \cite{Beukers1987}, Beukers analyzed the group $\Gamma_0(6)^\star$ and demonstrated that the resulting sequences coincide with the Apéry sequences used to prove the irrationality of $\zeta(3)$. In what follows, we show that this construction yields an infinite family of sequences, all of which share the same convergence rate. These sequences arise from the fact that the space of functions $E$ satisfying the necessary modular properties is a one-dimensional affine subspace.

Note that $N=6$ is square-free and has $4$ divisors. Therefore, from Lemma \ref{F:lemma}, there exists $F$ modular form of weight $4$ such that $L(F, 3) = \zeta(3),$ $F(-1/6\tau) = -6^2 \tau^4 F(\tau).$ One can compute $F$ to be
\begin{align}
\label{F}
    F(\tau) =  \frac{1}{40}(E_4(\tau) + 63E_4(3\tau) - 28E_4(2\tau) -36E_4(6\tau)),
\end{align}
where $E_4(\tau)$ is the Eisenstein series of weight $4$.

We note from Lemma \ref{E:lemma} that there is an affine space of dimension $1$ in $\mathcal{E}_2(N)$ of functions $E$ satisfying the desired properties. Given 
\begin{align*}
    E^0_6 & = E_2(\tau) - 10E_2(2\tau) + 15E_2(3\tau) - 6E_2(6\tau) \\
    E^1_6 & = 3E_2(3\tau) - 2E_2(2\tau),
\end{align*}
the elements $E$ of this affine space are generated by
\begin{align}
\label{space:E}
E_{6,\alpha}(\tau)=\alpha E^0_6(\tau ) + E^1_6(\tau), \text{ ~~~~~~~~~~for } \alpha \in \mathbb{R}
\end{align}
The modular form considered by Beukers corresponds to the choice $\alpha = -5/24,$ which we will denote by $E_b$. It is a well known fact that this modular form has a $\eta$-quotient expansion given by
\begin{align*}
    E_{b}(\tau)=\frac{(\eta(3\tau)\eta(2\tau))^{7}}{(\eta(6\tau) \eta(\tau))^{5}}.
\end{align*}
The uniformizer of $\Gamma_0(6)^\star$ that we will consider is
\begin{align}
\label{t6}
    t_6(\tau) = \left(\frac{\eta(6\tau) \eta(\tau)}{\eta(3\tau) \eta(2\tau)} \right)^{12}.
\end{align}
Note that the product $E_{b}(\tau)t_{6}(\tau)$ is a modular form of weight two on $\Gamma_0(6)$ whose Fourier expansion is given by
\begin{align*}
    E_{b}(\tau)t_{6}(\tau)=\frac{(\eta(6\tau) \eta(\tau))^7}{(\eta(3\tau) \eta(2\tau))^5}=q-7q^2+19q^3-23q^4+6q^5+11q^6+O(q^7).\
\end{align*}
The following identities relate the affine line \eqref{space:E} to Beukers' eta-product form.
\begin{prop}
\label{prop:level6_E_relations}
    For $E^0_6, E^1_6, E_{b}$ and $t_6$ as above, it holds that
    \begin{align}\label{E_free_in_terms_of_Eb}
        \left\{\begin{array}{ll}
            E_{b}(\tau) &= -\frac{1}{24} \frac{E^0_6(\tau)}{t_6(\tau)} \\
            E_{b}(\tau) &=  \frac{E^1_6(\tau)}{1-5t_6(\tau)} \\
        \end{array}\right.\,.
    \end{align}
\end{prop}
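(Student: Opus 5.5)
The plan is to reduce both identities to equalities between holomorphic weight two modular forms on $\Gamma_0(6)$, and then to check finitely many Fourier coefficients using the Sturm bound. Since $\dim M_2(\Gamma_0(6))=3$ and the index of $\Gamma_0(6)$ in $\operatorname{SL}_2(\mathbb{Z})$ is $12$, the Sturm bound is $\frac{2}{12}\cdot 12=2$. Hence two forms in $M_2(\Gamma_0(6))$ whose $q$-expansions agree through $q^2$ are equal.

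\textbf{Step 1 (modularity of all four objects).} By Lemma \ref{lemma:E2_modular_combinations}, $E^0_6$ and $E^1_6$ lie in $M_2(\Gamma_0(6))$. For $E^0_6$ the relevant check is $1-\frac{10}{2}+\frac{15}{3}-\frac{6}{6}=0$; for $E^1_6$ it is $\frac{3}{3}-\frac{2}{2}=0$.

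The eta quotients $E_b=(\eta(2\tau)\eta(3\tau))^7/(\eta(\tau)\eta(6\tau))^5$ and $E_bt_6=(\eta(\tau)\eta(6\tau))^7/(\eta(2\tau)\eta(3\tau))^5$ both have weight $\frac{1}{2}(14-10)=2$. I will verify Ligozat's criteria for each:
\begin{itemize}
\item $\sum_\delta \delta r_\delta\equiv 0$ and $\sum_\delta (6/\delta) r_\delta\equiv 0 \pmod{24}$;
\item $\prod_\delta \delta^{r_\delta}$ is a square;
\item the order $\frac{6}{24}\sum_\delta \gcd(c,\delta)^2 r_\delta/(\gcd(c,6/c)\,c\,\delta)$ at each cusp $1/c$, for $c\mid 6$, is nonnegative.
\end{itemize}
This shows $E_b,\,E_bt_6\in M_2(\Gamma_0(6))$ with trivial character. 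I expect this to be the main bookkeeping obstacle: one must confirm that $E_bt_6$ has no poles at the cusps $1/2$ and $1/3$, where $t_6$ itself has a pole. The cusp orders of $E_b$ should exactly compensate, and I will tabulate them explicitly.

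\textbf{Step 2 (first identity).} Using $E_2=1-24\sum_{n\ge 1}\sigma(n)q^n$, the constant term of $E^0_6$ is $1-10+15-6=0$. The $q$-coefficient is $-24$, and the $q^2$-coefficient is $-24(\sigma(2)-10)=168$. Therefore $-\frac{1}{24}E^0_6=q-7q^2+O(q^3)$. This agrees through $q^2$ with the expansion $E_bt_6=q-7q^2+\cdots$ recorded above, so the Sturm bound gives $E_bt_6=-\frac{1}{24}E^0_6$.

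\textbf{Step 3 (second identity).} Expanding the eta product gives $E_b=1+5q+13q^2+O(q^3)$. On the other side, $E^1_6=1+48q^2+O(q^3)$, so
\[
E^1_6-\tfrac{5}{24}E^0_6=1+5q+(48-35)q^2+O(q^3)=1+5q+13q^2+O(q^3).
\]
The Sturm bound then yields $E_b=E_{6,-5/24}=E^1_6-\frac{5}{24}E^0_6$, which justifies the paper's identification of $E_b$ with $\alpha=-5/24$.

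Combining this with Step 2 gives $E^1_6=E_b+\frac{5}{24}E^0_6=E_b-5E_bt_6=E_b(1-5t_6)$. Dividing by $1-5t_6$ on the region where it is nonzero, in particular near $i\infty$, gives the second identity as an identity of meromorphic functions.
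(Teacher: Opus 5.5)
Your proposal is correct and follows essentially the paper's own argument. The eta-quotient criterion places $E_b$ and $E_bt_6$ in $M_2(\Gamma_0(6))$, Lemma \ref{lemma:E2_modular_combinations} handles $E^0_6$ and $E^1_6$, and the Sturm bound $2$ reduces everything to matching $q^0,q^1,q^2$; your expansions $E_b=1+5q+13q^2+\cdots$, $-\frac1{24}E^0_6=q-7q^2+\cdots$, $E^1_6=1+48q^2+\cdots$ and the compensation of the poles of $t_6$ at the cusps $1/2,1/3$ all check out. The only difference is cosmetic: you first prove $E_b=E^1_6-\frac{5}{24}E^0_6$ and then deduce the second identity from the first, whereas the paper directly compares $E_b(1-5t_6)=1+48q^2+\cdots$ with $E^1_6$; your ordering has the small bonus of justifying the identification $\alpha=-5/24$.
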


\begin{proof}
Equivalently, we prove
\begin{align*}
    E_b(\tau)t_6(\tau)=-\frac1{24}E^0_6(\tau),
    \qquad
    E_b(\tau)(1-5t_6(\tau))=E^1_6(\tau).
\end{align*}
The eta-quotient criterion shows that $E_bt_6$ and $E_b(1-5t_6)$ are holomorphic modular forms of weight $2$ on $\Gamma_0(6)$. The forms $E^0_6$ and $E^1_6$ are in the same space by Lemma \ref{lemma:E2_modular_combinations}. Hence both differences above belong to $M_2(\Gamma_0(6))$.

The index of $\Gamma_0(6)$ in $SL_2(\mathbb{Z})$ is
\begin{align*}
    [SL_2(\mathbb{Z}):\Gamma_0(6)]
    =6\left(1+\frac12\right)\left(1+\frac13\right)=12,
\end{align*}
so the Sturm bound in weight $2$ is $2$. It is therefore enough to compare the coefficients of $q^0,q^1,q^2$. From the eta-products and the definitions of $E^0_6,E^1_6$ one obtains
\begin{align*}
    E_bt_6=q-7q^2+O(q^3),\qquad
    -\frac1{24}E^0_6=q-7q^2+O(q^3),
\end{align*}
and
\begin{align*}
    E_b(1-5t_6)=1+48q^2+O(q^3),\qquad
    E^1_6=1+48q^2+O(q^3).
\end{align*}
The two identities follow.
\end{proof}
This leads to the following corollary:

\begin{corollary}
    The ratio $\frac{E^0_6(\tau)}{E^1_6(\tau)} $ is a Hauptmodul of $\Gamma_0(6)^\star$.
\end{corollary}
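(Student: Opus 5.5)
From Proposition \ref{prop:level6_E_relations} we have $E^0_6=-24E_bt_6$ and $E^1_6=E_b(1-5t_6)$. Dividing these two identities, $E_b$ cancels, and we obtain
\begin{equation*}
    \frac{E^0_6(\tau)}{E^1_6(\tau)}=\frac{-24\,t_6(\tau)}{1-5t_6(\tau)} .
\end{equation*}
This is valid as an identity of meromorphic functions on $\mathbb{H}$, since $E_b$ is an eta quotient and therefore has no zeros on $\mathbb{H}$. The right-hand side is the Möbius transformation $M(x)=-24x/(1-5x)$ applied to $t_6$. Its determinant is $-24\cdot 1-0\cdot(-5)=-24\neq 0$, so $M$ is an automorphism of $\mathbb{P}^1(\mathbb{C})$.

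The plan is therefore the following.
\begin{enumerate}
    \item Take as the standing input, used earlier in the paper, that $t_6$ is a Hauptmodul of $\Gamma_0(6)^\star$. By this we mean that it is invariant under $\Gamma_0(6)$ and under $W_6$, and that it induces an isomorphism from $X_0(6)^\star$ onto $\mathbb{P}^1$. Invariance under $W_6$ follows from $\eta(-1/\tau)=\sqrt{-i\tau}\,\eta(\tau)$. Indeed, $W_6$ swaps $\eta(\tau)\leftrightarrow\eta(6\tau)$ and $\eta(2\tau)\leftrightarrow\eta(3\tau)$ up to the factors $\sqrt{-i\cdot}$. These factors cancel in the quotient because the exponent $12$ is even and the weights balance: there are two eta factors upstairs and two downstairs.
    \item Observe that if $t$ is a Hauptmodul and $M\in PGL_2(\mathbb{C})$, then $M\circ t$ is again invariant under the group. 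Moreover, the composition of an isomorphism $X_0(6)^\star\to\mathbb{P}^1$ with an automorphism of $\mathbb{P}^1$ is still an isomorphism. Hence $M\circ t$ is again a Hauptmodul.
    \item Conclude that $E^0_6/E^1_6=M(t_6)$ is a Hauptmodul of $\Gamma_0(6)^\star$. Its normalization is $M(t_6)=-24q+O(q^2)$, so it vanishes at $i\infty$ with a simple zero. It can be renormalized to $-\tfrac{1}{24}E^0_6/E^1_6=q+O(q^2)$ if desired.
\end{enumerate}

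The only point needing care is the first step: confirming that $t_6$ genuinely generates the function field of $X_0(6)^\star$, rather than merely being invariant. I would justify this by counting poles. The quotient $X_0(6)^\star$ has genus zero and a single cusp, the image of $i\infty$ and $0$. The function $1/t_6=q^{-1}+\dots$ has a simple pole there, and $t_6$ is holomorphic and nonvanishing on $\mathbb{H}$ because it is an eta quotient. So $1/t_6$ has degree one, and $t_6$ is therefore a generator.

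In short, one must check that $1/t_6$ has exactly a simple pole at the unique cusp of $X_0(6)^\star$ and no other poles. Granting that, the corollary is immediate from the proposition.
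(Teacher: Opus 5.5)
Your main argument is the paper's proof. You divide the two identities of Proposition~\ref{prop:level6_E_relations} to get $E^0_6/E^1_6=-24t_6/(1-5t_6)$, and observe that a M\"obius transform of the Hauptmodul $t_6$ is again a Hauptmodul. The paper takes the Hauptmodul property of $t_6$ for granted, so with that standing input your proof of the corollary is complete.

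Your supplementary verification of that input, however, rests on a false claim. With $\Gamma_0(6)^\star=\langle\Gamma_0(6),W_6\rangle$ as defined in the paper, $X_0(6)^\star$ has two cusps, not one.
\begin{itemize}
\item $W_6$ identifies $\infty$ with $0$, and $1/2$ with $1/3$ (indeed $1/2\mapsto -1/3$), but it does not merge these two classes.
\item A single cusp occurs only for the full Atkin--Lehner quotient by $\langle W_2,W_3\rangle$. But $t_6$ is not $W_2$-invariant, since $W_2$ sends $\infty$ to the class of $1/3$.
\end{itemize}
So ``no zeros on $\mathbb{H}$ plus a simple pole at the unique cusp'' does not yet show that $1/t_6$ has degree one: you must also examine the class of $1/2$.

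Ligozat's order formula settles this. Applied to $\prod\eta(\delta\tau)^{r_\delta}$ with $(r_1,r_2,r_3,r_6)=(12,-12,-12,12)$, it gives order $+1$ at $\infty$ and $0$, and order $-1$ at $1/2$ and $1/3$. The cover $X_0(6)\to X_0(6)^\star$ is unramified at the cusps, because $W_6$ moves each cusp to a different one. Hence on $X_0(6)^\star$ the function $t_6$ has exactly one simple zero, at $[\infty]$, and one simple pole, at $[1/2]$, so it has degree one. In particular $1/t_6$ vanishes at $[1/2]$ rather than having a pole there, and your conclusion holds once this second cusp is accounted for.
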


\begin{proof}
    We simply observe that $$ -\frac{1}{24}\frac{E^0_6(\tau)}{E^1_6(\tau)} = \frac{t_6(\tau)}{1-5t_6(\tau)},$$
that is, the quotient of $E^0$ and $E^1$ is a Möbius transform of the Hauptmodul, and therefore, it is a Hauptmodul itself.
\end{proof}


\begin{theorem}[The level $6$ affine family]\label{thm:level6_family}
   Fix $F$ as in (\ref{F}), and the corresponding $f$ as in Hecke's Lemma \ref{HeckeLemma}, and let $t_6$ as in (\ref{t6}). Then, for any $\alpha \in \mathbb{Q}$ and $E_\alpha(\tau) = E_b(\tau)(1+\alpha t_6(\tau))$, it holds that
   \begin{align*}
     E_\alpha(\tau)(f(\tau) - \zeta(3) ) =   \sum_{n=0}^\infty (a^\alpha_n - b_n^\alpha \zeta(3))t_6(\tau)^n, 
   \end{align*}
   and the sequences $a_n^\alpha/b_n^\alpha$ converge to $\zeta(3)$. Moreover,
\begin{align*}
    | a_n^\alpha - b_n^\alpha \zeta(3)| < ((\sqrt{2} +1 )^4 -\epsilon)^{-n}
\end{align*}
for every $0<\epsilon<(\sqrt2+1)^4$ and every $n$ sufficiently large. Moreover, there is a common denominator $D_n^\alpha$ for $a_n^\alpha$ and $b_n^\alpha$ satisfying $D_n^\alpha < e^{(3+\epsilon)n}$ for every $\epsilon>0$ and every $n$ sufficiently large. Furthermore, if we let
\begin{align*}
    E_b(\tau)(f(\tau)-\zeta(3))  = \sum_{n=0}^\infty (a_n - b_n \zeta(3))t_6(\tau)^n,
\end{align*}
then we can write
   \begin{align}
   \label{recurrence}
       a_n^\alpha & = a_n + \alpha a_{n-1} \\
       b_n^\alpha & = b_n + \alpha b_{n-1} 
   \end{align}
   for $n\ge 1$, where $a_n$ and $b_n$ determine the Apéry sequence.
\end{theorem}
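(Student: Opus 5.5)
The argument reduces everything to Beukers' case $\alpha=0$, where the statement is the classical Apéry result, and then handles the factor $(1+\alpha t_6)$ formally.

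\textbf{Step 1: the recurrence.} By Beukers' analysis (Proposition \ref{prop:level6_E_relations} together with the standard identification), the expansion
\begin{equation*}
E_b(\tau)(f(\tau)-\zeta(3))=\sum_{n\ge0}(a_n-b_n\zeta(3))t_6(\tau)^n
\end{equation*}
holds with $b_n$ the Apéry numbers and $a_n$ the companion rationals. Multiplying both sides by $1+\alpha t_6(\tau)$ and comparing coefficients of $t_6^n$ gives the expansion for $E_\alpha$ with
\begin{equation*}
a_n^\alpha=a_n+\alpha a_{n-1},\qquad b_n^\alpha=b_n+\alpha b_{n-1}\qquad (n\ge1),
\end{equation*}
and $a^\alpha_0=a_0$, $b^\alpha_0=b_0$. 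This proves \eqref{recurrence}. Note that $E_\alpha$ lies on the affine line \eqref{space:E}. Indeed, by Proposition \ref{prop:level6_E_relations},
\begin{equation*}
E_b(1+\alpha t_6)=E^1_6+5E_bt_6+\alpha E_bt_6=E^1_6-\tfrac{5+\alpha}{24}E^0_6 .
\end{equation*}
So $E_\alpha$ also satisfies the Fricke relation, although the recurrence makes this unnecessary for the estimates below.

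\textbf{Step 2: decay.} Since $a_n^\alpha-b_n^\alpha\zeta(3)=(a_n-b_n\zeta(3))+\alpha(a_{n-1}-b_{n-1}\zeta(3))$, the classical bound $|a_n-b_n\zeta(3)|\le((\sqrt2+1)^4-\epsilon')^{-n}$ transfers directly. Choosing $\epsilon'<\epsilon$ absorbs the constant $1+|\alpha|((\sqrt2+1)^4-\epsilon')$ for large $n$.

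Alternatively, the decay follows from Lemma \ref{lemma:branching}. The series $(1+\alpha t_6)\sum(a_n-b_n\zeta(3))t_6^n$ has the same radius of convergence as the $\alpha=0$ series, because multiplying by a polynomial cannot shrink it. That radius is $|t_6|$ at the next branch point after $i/\sqrt6$, namely $(\sqrt2-1)^4$. I would cite the classical value rather than recompute the branch points.

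\textbf{Step 3: denominators and convergence.} Write $\alpha=u/v$. Apéry's result gives $b_n\in\mathbb{Z}$ and $2\,\mathrm{lcm}(1,\dots,n)^3a_n\in\mathbb{Z}$. Then $D^\alpha_n=2v\,\mathrm{lcm}(1,\dots,n)^3$ is a common denominator for $a_n^\alpha$ and $b_n^\alpha$, and by the prime number theorem $D^\alpha_n<e^{(3+\epsilon)n}$ for large $n$.

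For convergence of $a_n^\alpha/b_n^\alpha$ to $\zeta(3)$, it suffices that $|b_n^\alpha|$ does not decay. Since $b_n\sim C(\sqrt2+1)^{4n}n^{-3/2}$, we have $b_{n-1}/b_n\to(\sqrt2-1)^4$, so
\begin{equation*}
b_n^\alpha=b_n\bigl(1+\alpha(\sqrt2-1)^4+o(1)\bigr).
\end{equation*}
This grows exponentially unless $1+\alpha(\sqrt2-1)^4=0$, which is impossible because $\alpha$ is rational. Dividing the decay bound by $b_n^\alpha$ then gives $a_n^\alpha/b_n^\alpha\to\zeta(3)$.

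\textbf{Main obstacle.} The hard step is Step 1's base case: identifying Beukers' coefficients $a_n,b_n$ with the Apéry sequences and their denominator and asymptotic properties. I would simply cite \cite{Beukers1987} for this. Everything after it is formal bookkeeping.
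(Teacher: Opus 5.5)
Your proposal is correct and follows the paper's proof in its overall structure. You reduce to Beukers' case $\alpha=0$ and multiply by $1+\alpha t_6$ to get \eqref{recurrence}. You absorb the denominator of $\alpha$ into a fixed factor in front of $\mathrm{lcm}(1,\dots,n)^3$. You rule out cancellation in $b_n+\alpha b_{n-1}$ because it would force $\alpha=-(\sqrt2+1)^4\notin\mathbb{Q}$.

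The one genuine difference is the decay step. The paper invokes Lemma \ref{lemma:branching} with the branch value $t_6(p_6)=(\sqrt2+1)^4$. Your main argument instead transfers the classical Apéry bound through the identity $a_n^\alpha-b_n^\alpha\zeta(3)=(a_n-b_n\zeta(3))+\alpha(a_{n-1}-b_{n-1}\zeta(3))$, which the paper only records afterwards, in the subsection on dependence on the parameter. Your route is more elementary and does not depend on the branching lemma, whose proof in the paper is only a sketch. The paper's route explains the rate geometrically. Your extra check that $E_\alpha=E^1_6-\frac{5+\alpha}{24}E^0_6$ is also a useful addition the paper's proof lacks. It identifies the theorem's family with the affine line \eqref{space:E} and recovers Beukers' parameter $-5/24$ at $\alpha=0$.

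One slip needs fixing in your alternative route. You say the radius is $|t_6|$ at the next branch point after $i/\sqrt6$, ``namely $(\sqrt2-1)^4$''. But $(\sqrt2-1)^4=t_6(i/\sqrt6)$ is the value at the Fricke fixed point itself, which only limits the expansion of $E_b$ alone. The next branch value is $t_6(p_6)=(\sqrt2+1)^4\approx 33.97$. A radius of $(\sqrt2-1)^4<1$ would give exponentially growing coefficients, not decaying ones. Your main argument never uses that sentence, so the proof stands once it is corrected.
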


\begin{proof}
For $\alpha=0$, Beukers' construction gives
\begin{align*}
E_b(\tau)(f(\tau)-\zeta(3))=\sum_{n=0}^{\infty}(a_n-b_n\zeta(3))t_6(\tau)^n,
\end{align*}
where $a_n/b_n$ are the Apéry approximations. Multiplying this identity by $1+\alpha t_6(\tau)$ gives
\begin{align*}
E_\alpha(\tau)(f(\tau)-\zeta(3))
=-\zeta(3)+\sum_{n=1}^{\infty}\left((a_n+\alpha a_{n-1})-(b_n+\alpha b_{n-1})\zeta(3)\right)t_6(\tau)^n,
\end{align*}
for $n\ge 1$, with the usual initial values $a_0=0$ and $b_0=1$. This proves \eqref{recurrence}.

By Lemma \ref{lemma:branching}, the nearest obstruction to the $t_6$-expansion of the product is the first branch point after the Fricke fixed point. For $\Gamma_0(6)^\star$ this value is $t_6(p_6)=(\sqrt2+1)^4$. Therefore, we have
\begin{align*}
|a_n^\alpha-b_n^\alpha\zeta(3)|=O_\epsilon\left(((\sqrt2+1)^4-\epsilon)^{-n}\right).
\end{align*}

It remains to record the denominator estimate. In the classical Apéry construction, $b_n\in\mathbb{Z}$ and
\begin{align*}
    \operatorname{lcm}(1,\ldots,n)^3 a_n\in\mathbb{Z}.
\end{align*}
If $\alpha=r/s$ with $r,s\in\mathbb{Z}$ and $s>0$, then
\begin{align*}
    \frac{a_n^\alpha}{b_n^\alpha}
    =
    \frac{s a_n+r a_{n-1}}{s b_n+r b_{n-1}}.
\end{align*}
Thus the fixed denominator $s$ does not change the rational approximants. However, unless $s=1$, the unscaled coefficients $a_n^\alpha$ and $b_n^\alpha$ need not satisfy the same integrality statements as the Apéry coefficients. Instead, define the equivalent scaled coefficients
\begin{align*}
    A_n^\alpha=s a_n+r a_{n-1},
    \qquad
    B_n^\alpha=s b_n+r b_{n-1}.
\end{align*}
Then
\begin{align*}
    \operatorname{lcm}(1,\ldots,n)^3 A_n^\alpha\in\mathbb{Z},
    \qquad
    B_n^\alpha\in\mathbb{Z},
\end{align*}
and $A_n^\alpha/B_n^\alpha=a_n^\alpha/b_n^\alpha$. Equivalently, one may work with the rescaled linear forms
\begin{align*}
    A_n^\alpha-B_n^\alpha\zeta(3)
    =
    s\left(a_n^\alpha-b_n^\alpha\zeta(3)\right),
\end{align*}
where the same fixed factor $s$ appears in both terms. Since $s$ is independent of $n$, it is absorbed in the asymptotic estimate. The prime number theorem gives
\begin{align*}
    \operatorname{lcm}(1,\ldots,n)^3<e^{(3+\epsilon)n}
\end{align*}
for every $\epsilon>0$ and all sufficiently large $n$.

Finally, the Apéry recurrence has dominant characteristic root $(\sqrt2+1)^4$ at infinity, so $b_n$ and $b_{n-1}$ have the same exponential scale. A cancellation of the dominant term in $b_n+\alpha b_{n-1}$ would require $\alpha=-(\sqrt2+1)^4$, which is irrational. Hence no rational $\alpha$ cancels the dominant term, $|b_n^\alpha|$ grows exponentially, and the displayed linear forms force $a_n^\alpha/b_n^\alpha\to \zeta(3)$.

\end{proof}

\subsection{Dependence on the parameter}

The preceding theorem shows that the parameter $\alpha$ changes the approximants by a first-order shift of the Apéry sequences. More explicitly, for fixed rational $\alpha$,
\begin{align*}
    a_n^\alpha-b_n^\alpha\zeta(3)
    = (a_n-b_n\zeta(3))+\alpha(a_{n-1}-b_{n-1}\zeta(3)).
\end{align*}
Both summands are governed by the same nearest branch value of $t_6$, so every rational parameter gives the same asymptotic decay. The numerical differences visible in the following tables are therefore finite-level effects: some choices of $\alpha$ improve the first few rational approximations, while others make them worse, but no rational choice changes the limiting rate.

In the table below, we represent the first approximations $a_n^\alpha/b_n^\alpha$ for a few values of $\alpha$, where $E_\alpha$ is as in the Theorem above.
Let $R_n^\alpha$ be the order of magnitude of $| a_{n}^\alpha/b_{n}^\alpha -  \zeta(3)|$ and note that the case $\alpha = 0$ yields Apéry sequence.

\vspace{0.6cm}
\renewcommand{\arraystretch}{1.5}
\begin{center}
\begin{tabular}{|c|c c|c c|c c|c c|}
\hline
 $\alpha$ & $a_2^\alpha/b_2^\alpha$ & $ R_n^\alpha$ & $a_3^\alpha/b_3^\alpha$ & $R_n^\alpha$ & $a_4^\alpha/b_4^\alpha$ & $R_n^\alpha$ & $a_5^\alpha/b_5^\alpha$ & $R_n^\alpha$ \\
\hline
  $0$ & $\frac{351}{292}$ & $10^{-6}$  & $\frac{62531}{52020}$ & $10^{-9}$ & $ \frac{11424695}{9504288}$ & $10^{-12}$ & $ \frac{35441662103}{29484180000} $ &  $10^{-15}$\\
\hline

  $-100$ & $\frac{2049}{1708}$ & $10^{-3}$  & $\frac{253369}{210780}$ & $10^{-6}$ & $ \frac{38600105}{32111712}$ & $10^{-9}$ & $ \frac{107367025397}{89319420000} $ &  $10^{-12}$\\
\hline

  $-5$ & $\frac{77}{64}$ & $10^{-3}$  & $\frac{2921}{2430}$ & $10^{-7}$ & $ \frac{991495}{824832}$ & $10^{-10}$ & $ \frac{589608911}{490500000} $ &  $10^{-13}$\\
\hline

  $-2$ & $\frac{101}{84}$ & $10^{-4}$  & $\frac{56213}{46764}$ & $10^{-7}$ & $ \frac{3474733}{2890656}$ & $10^{-10}$ & $ \frac{1206869939}{1004004000} $ &  $10^{-13}$\\
\hline

  $1$ & $\frac{125}{104}$ & $10^{-4}$  & $\frac{32845}{27324}$ & $10^{-7}$ & $ \frac{3974981}{3306816}$ & $10^{-11}$ & $ \frac{6144958163}{5112036000} $ &  $10^{-14}$\\
\hline

  $2$ & $\frac{399}{332}$ & $10^{-4}$  & $\frac{68849}{57276}$ & $10^{-7}$ & $ \frac{12425191}{10336608}$ & $10^{-10}$ & $ \frac{38297835853}{31860252000} $ &  $10^{-13}$\\
\hline

  $5$ & $\frac{471}{392}$ & $10^{-4}$  & $\frac{39163}{32580}$ & $10^{-7}$ & $ \frac{13925935}{11585088}$ & $10^{-10}$ & $ \frac{21291048239}{17712180000} $ &  $10^{-13}$\\
\hline

  $100$ & $\frac{917}{764}$ & $10^{-3}$  & $\frac{378431}{314820}$ & $10^{-6}$ & $ \frac{20483165}{17040096}$ & $10^{-9}$ & $ \frac{59416783201}{49429260000} $ &  $10^{-12}$\\
\hline
\end{tabular}
\end{center}
\vspace{0.7cm}

One can also observe the errors given for other approximations for larger $n$. Let $D_n $ denote the number of digits of the denominator of $a_n^\alpha/b_n^\alpha$, and observe that

\begin{center}
\begin{tabular}{|c|c c|c c|c c|c c|c c|}
\hline
 $\alpha$ & $ R_{95}^\alpha$ &  $ D_{95}^\alpha$ &  $ R_{96}^\alpha$ &  $ D_{96}^\alpha$ & $ R_{97}^\alpha$ &  $ D_{97}^\alpha$&  $ R_{98}^\alpha$  &  $ D_{98}^\alpha$ & $ R_{99}^\alpha$& $ D_{99}^\alpha$\\
\hline
$0$ & $ 10^{-291}$ & $257$ &  $ 10^{-294}$ & $258$ & $ 10^{-297}$ & $266$&  $10^{-300} $ & $265$ & $ 10^{-303}$ & $269$\\
\hline
$-100$ & $ 10^{-288}$ & $258$ &  $ 10^{-291}$ & $255$ & $ 10^{-294} $& $266$&  $10^{-297} $& $267$ & $ 10^{-300}$ & $269$\\
\hline
$-5$ & $ 10^{-289}$& $255$ &  $ 10^{-292}$ & $253$ & $10^{-295} $ & $261$  &  $10^{-298} $& $262$ & $ 10^{-301}$& $266$\\
\hline
$-2$ & $ 10^{-289}$  &$257$ &  $ 10^{-292}$ &$256$ & $ 10^{-295} $& $265$ &  $10^{-298} $ &$265$ & $ 10^{-301}$ & $268$ \\
\hline
$1$ & $ 10^{-289}$  &$256$  &  $ 10^{-292}$  &$257$  & $ 10^{-295} $  &$264$ &  $10^{-298}$  & $267$  & $ 10^{-302}$   &$266$ \\
\hline
$2$ & $ 10^{-289}$   &$258$ &  $ 10^{-292}$   &$258$ & $ 10^{-295} $  &$266$ &  $10^{-298} $  &$266$  & $ 10^{-301}$  &$266$ \\
\hline
$5$ & $ 10^{-289}$   &$258$ &  $ 10^{-292}$   &$255$ & $ 10^{-295} $  &$266$ &  $10^{-298} $   &$266$ & $ 10^{-301}$  &$269$ \\
\hline
$100$ & $ 10^{-288}$  &$258$  &  $ 10^{-291}$  &$257$  & $ 10^{-294} $  &$266$ &  $10^{-297} $   &$266$ & $ 10^{-300}$  &$267$ \\
\hline
\end{tabular}
\end{center}
\vspace{0.7cm}

Recall that the sequences $(b_n)_{n \geq 0}$ and $(a_n)_{n \ge 0 }$ satisfy the following recurrence
\begin{equation}\label{eq:apery_rec}
    (n+1)^3 b_{n+1} - (34n^3 + 51n^2 + 27n + 5) b_n + n^3 b_{n-1} = 0 \quad \text{for } n \geq 1.
\end{equation}
Therefore, using the relation (\ref{recurrence}) and this recurrence, we can conclude that

\begin{corollary}\label{thm:apery_shifted_recurrence}
Let $\alpha \neq 0$ be a constant, and define $(c_n)_{n \geq 1}$ by
\begin{equation}\label{eq:c_def}
    c_n = b_n + \alpha b_{n-1}.
\end{equation}
If $U_n$ is defined as:
\begin{align*}
    U_n = \alpha^2 (n+1)^3 + \alpha (34n^3 + 51n^2 + 27n + 5) + n^3,    
\end{align*}
then the sequence $(c_n)_{n \ge 1}$ satisfies the second-order recurrence
\begin{equation}\label{eq:new_rec}
    P_n c_{n+2} + Q_n c_{n+1} + R_n c_n = 0 \quad \text{for } n \geq 1,
\end{equation}
where the polynomial coefficients are given by:
\begin{align*}
    P_n &= \alpha (n+2)^3 U_n, \\
    Q_n &= (n+1)^3 \left( U_n + \alpha^2 U_{n+1} \right) - U_n U_{n+1}, \\
    R_n &= \alpha n^3 U_{n+1}.
\end{align*}
\end{corollary}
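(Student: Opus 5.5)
The plan is to eliminate the Apéry numbers $b_n$ between the recurrence \eqref{eq:apery_rec} and the definition \eqref{eq:c_def}. To shorten notation, put $S_n=34n^3+51n^2+27n+5$, so that \eqref{eq:apery_rec} reads $(n+1)^3b_{n+1}=S_nb_n-n^3b_{n-1}$. Then $U_n=\alpha^2(n+1)^3+\alpha S_n+n^3$.

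\textbf{Step 1: a $2\times2$ system at level $n$.} For fixed $n\ge1$, use \eqref{eq:apery_rec} to remove $b_{n+1}$ from $c_{n+1}=b_{n+1}+\alpha b_n$. Together with \eqref{eq:c_def} this gives
\begin{align*}
c_n&=b_n+\alpha b_{n-1},\\
(n+1)^3c_{n+1}&=\bigl(S_n+\alpha(n+1)^3\bigr)b_n-n^3b_{n-1}.
\end{align*}
The determinant of this system in the unknowns $(b_n,b_{n-1})$ is $-n^3-\alpha S_n-\alpha^2(n+1)^3=-U_n$. This is how $U_n$ enters. Cramer's rule then gives
\begin{align*}
U_nb_n=n^3c_n+\alpha(n+1)^3c_{n+1}.
\end{align*}

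\textbf{Step 2: the same system at level $n+1$.} Apply the construction of Step 1 with $n$ replaced by $n+1$; this is legitimate since \eqref{eq:apery_rec} holds for all $n\ge1$. The unknowns are now $(b_{n+1},b_n)$, the determinant is $-U_{n+1}$, and we solve for the \emph{second} unknown $b_n$. Write $T_{n+1}=S_{n+1}+\alpha(n+2)^3$. Cramer's rule gives
\begin{align*}
U_{n+1}b_n=T_{n+1}c_{n+1}-(n+2)^3c_{n+2}.
\end{align*}
The key simplification comes from the definition of $U_{n+1}$, which can be rewritten as
\begin{align*}
\alpha T_{n+1}=U_{n+1}-(n+1)^3 .
\end{align*}

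\textbf{Step 3: eliminate $b_n$.} Multiply the identity of Step 1 by $\alpha U_{n+1}$ and the identity of Step 2 by $\alpha U_n$. Both left-hand sides become $\alpha U_nU_{n+1}b_n$, so subtracting eliminates $b_n$. Then replace $\alpha T_{n+1}$ by $U_{n+1}-(n+1)^3$. The result is
\begin{align*}
\alpha(n+2)^3U_n\,c_{n+2}+\Bigl(\alpha^2(n+1)^3U_{n+1}-U_n\bigl(U_{n+1}-(n+1)^3\bigr)\Bigr)c_{n+1}+\alpha n^3U_{n+1}\,c_n=0.
\end{align*}
The three coefficients are exactly $P_n$, $Q_n$ and $R_n$ as stated, which proves \eqref{eq:new_rec}.

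\textbf{Main point requiring care.} The only delicate issue is dividing by $U_n$ and $U_{n+1}$ in Steps 1 and 2. I would avoid this by never dividing. The two displayed Cramer identities hold as polynomial identities for every $n$, including when $U_n=0$, since they come from multiplying the system by its adjugate matrix. Step 3 uses only multiplications and subtractions, so \eqref{eq:new_rec} holds for all $n\ge1$ and all $\alpha\neq0$.

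The hypothesis $\alpha\ne0$ is needed for a different reason: when $\alpha=0$ one has $P_n=R_n=0$, and the resulting relation is not a genuine second-order recurrence. Finally, the index range is correct: $c_n$ is defined for $n\ge1$, and Steps 1 and 2 invoke \eqref{eq:apery_rec} only at the indices $n\ge1$ and $n+1\ge2$.
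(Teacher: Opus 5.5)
Your proof is correct and is essentially the paper's argument: both derive $U_nb_n=n^3c_n+\alpha(n+1)^3c_{n+1}$ and eliminate the Apéry numbers. The differences are cosmetic. You equate two Cramer-rule expressions for $b_n$ (from the systems at levels $n$ and $n+1$), whereas the paper substitutes expressions for $b_n$ and $b_{n+1}$ into $c_{n+1}=b_{n+1}+\alpha b_n$. Keeping the identities in multiplied-out form, rather than dividing by $U_n$ and $U_{n+1}$ as the paper does, is a small gain in rigor, since the paper's division tacitly assumes $U_n,U_{n+1}\neq 0$.
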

\begin{proof}
Let
\begin{align*}
    V_n=34n^3+51n^2+27n+5.
\end{align*}
The Apéry recurrence gives
\begin{align*}
    (n+1)^3b_{n+1}=V_nb_n-n^3b_{n-1}.
\end{align*}
Using $c_n=b_n+\alpha b_{n-1}$ and multiplying by $\alpha$, we obtain
\begin{align*}
    \alpha(n+1)^3c_{n+1}
    =
    \bigl(\alpha^2(n+1)^3+\alpha V_n+n^3\bigr)b_n-n^3c_n
    =
    U_nb_n-n^3c_n.
\end{align*}
Thus
\begin{equation}\label{eq:bn_from_cn}
    b_n=\frac{\alpha(n+1)^3c_{n+1}+n^3c_n}{U_n}.
\end{equation}
Applying the same identity with $n$ replaced by $n+1$ gives
\begin{align*}
    b_{n+1}=\frac{\alpha(n+2)^3c_{n+2}+(n+1)^3c_{n+1}}{U_{n+1}}.
\end{align*}
Finally, the identity $c_{n+1}=b_{n+1}+\alpha b_n$ and the two displayed formulas express everything in terms of $c_{n+2},c_{n+1},c_n$. Multiplying by $U_nU_{n+1}$ and collecting coefficients gives
\begin{align*}
    \alpha(n+2)^3U_n c_{n+2}
    +
    \left((n+1)^3(U_n+\alpha^2U_{n+1})-U_nU_{n+1}\right)c_{n+1}
    +
    \alpha n^3U_{n+1}c_n=0,
\end{align*}
which is the stated recurrence.
\end{proof}

We note that the leading term of each of the recurrence polynomials is given by
\begin{align*}
    P_n & = \alpha (\alpha^2 + 34 \alpha + 1)n^6 + \ldots \\
    Q_n & = -34\alpha( \alpha^2 + 34 \alpha + 1)n^6 + \ldots \\
    R_n & = \alpha (\alpha^2 + 34 \alpha + 1)n^6 + \ldots.
\end{align*}

The polynomial $t^2 - 34t + 1$ appearing in the leading coefficient of the Picard-Fuchs operator encodes critical geometric and arithmetic information. Its roots, $\rho_{\pm} = 17 \pm 12\sqrt{2}$, coincide with the values of the Hauptmodul $t_6$ at the first two branching points of the underlying modular curve. Arithmetically, these roots determine the asymptotic behavior of the Apéry sequence, with the dominant root governing the convergence rate of the rational approximations $a_n/b_n$.

Analogously, we also recall that the generating series $B(t) =\sum_{n=0}^\infty b_nt^n$ is a solution of the differential equation $\mathcal{L}$ where
\begin{align*}
    \mathcal{L} = D^3 - t(34D^3 + 51D^2 + 27D + 5) + t^2(D+1)^3,
\end{align*}
where $D = t\frac{d}{dt}.$ We could also write $\mathcal{L}$ in standard polynomial form as $\mathcal{L} = B_3(t) D^3 + B_2(t) D^2 + B_1(t) D + B_0(t)$, where:
\begin{align*}
    B_3(t) &= 1 - 34t + t^2, \\
    B_2(t) &= -51t + 3t^2, \\
    B_1(t) &= -27t + 3t^2, \\
    B_0(t) &= -5t + t^2.
\end{align*}

If $C(t) = \sum_{n=0}^\infty c_nt^n,$ where $c_n$ is as in corollary \ref{thm:apery_shifted_recurrence}, then
\begin{align*}
    C(t) = (1+\alpha t)B(t).
\end{align*}

\begin{theorem}
Let $\mathcal{L} = \sum_{i=0}^3 B_i(t) D^i$ be the third-order linear differential operator as above. If $B(t)$ is a formal power series annihilated by $\mathcal{L}$, then the series $C(t) = (1 + \alpha t)B(t)$ is annihilated by the differential operator $\tilde{\mathcal{L}} = \sum_{i=0}^3 A_i(t) D^i$, where the polynomial coefficients $A_i(t)$ are given explicitly by:
\begin{align*}
    A_3(t) &= w^3 B_3(t) \\
    A_2(t) &= w^2 \big[ -3\alpha t B_3(t) + w B_2(t) \big] \\
    A_1(t) &= w \big[ -3\alpha t(1-\alpha t) B_3(t) - 2\alpha t w B_2(t) + w^2 B_1(t) \big] \\
    A_0(t) &= -\alpha t(1-4\alpha t+\alpha^2 t^2) B_3(t) - \alpha t w (1-\alpha t) B_2(t) - \alpha t w^2 B_1(t) + w^3 B_0(t)
\end{align*}
with $w = 1 + \alpha t$.
\end{theorem}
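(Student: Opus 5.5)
The plan is to invert the relation between $B$ and $C$ and substitute. Since $w=1+\alpha t$ is invertible as a formal power series, $B(t)=g(t)\,C(t)$ with $g=1/w$. Then $\mathcal{L}B=0$ is equivalent to $\mathcal{L}(gC)=0$. I will expand $\mathcal{L}(gC)$ as a combination of $D^jC$, $0\le j\le 3$, with rational-function coefficients, and then multiply by $w^4$ to clear denominators. The resulting operator is $\tilde{\mathcal{L}}=w^4\,\mathcal{L}\circ g$, which annihilates $C$ exactly when $\mathcal{L}$ annihilates $B$.

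The first step is to compute the derivatives of $g$ under $D=t\frac{d}{dt}$ in closed form:
\begin{align*}
Dg&=-\alpha t\,g^2,\\
D^2g&=-\alpha t(1-\alpha t)\,g^3,\\
D^3g&=-\alpha t(1-4\alpha t+\alpha^2t^2)\,g^4 .
\end{align*}
Each follows from the previous one by the rule $D(t^m)=mt^m$ together with $Dg=-\alpha t g^2$, and by simplifying with $w=1+\alpha t$. The pattern is that $D^mg$ equals $g^{m+1}$ times a polynomial of degree $m$.

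The second step applies the Leibniz rule for the derivation $D$:
\begin{align*}
D^i(gC)=\sum_{j=0}^{i}\binom{i}{j}(D^{i-j}g)\,D^jC .
\end{align*}
I substitute this into $\sum_i B_i D^i(gC)$ and collect the coefficient of each $D^jC$. After multiplying by $w^4$, every term $g^{m+1}$ becomes $w^{3-m}$, so all coefficients are polynomials. Reading off the coefficients gives the following.
\begin{itemize}
\item The coefficient of $D^3C$ comes only from $B_3g$, which gives $w^3B_3$.
\item The coefficient of $D^2C$ comes from $3B_3Dg+B_2g$, which gives $w^2[-3\alpha tB_3+wB_2]$.
\item The coefficient of $DC$ comes from $3B_3D^2g+2B_2Dg+B_1g$, which gives $w[-3\alpha t(1-\alpha t)B_3-2\alpha t wB_2+w^2B_1]$.
\item The coefficient of $C$ comes from $B_3D^3g+B_2D^2g+B_1Dg+B_0g$, which gives exactly the stated $A_0$.
\end{itemize}
These match $A_3,\dots,A_0$ as displayed in the statement.

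The only real work is the computation of $D^3g$, which produces the factor $1-4\alpha t+\alpha^2t^2$. It requires expanding $(-\alpha t+2\alpha^2t^2)w+3\alpha^2t^2(1-\alpha t)$ and checking the cancellations. Everything else is bookkeeping of binomial coefficients.

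Finally, I would remark that $w^4$ is a unit in $\mathbb{Q}[[t]]$. Hence $\tilde{\mathcal{L}}C=w^4\mathcal{L}B=0$, which proves the claim. The same argument also shows the converse: any power series annihilated by $\tilde{\mathcal{L}}$, divided by $w$, is annihilated by $\mathcal{L}$.
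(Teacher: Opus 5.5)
Your proposal is correct and is essentially the paper's argument: substitute $B=w^{-1}C$, expand $D^i(w^{-1}C)$ by the Leibniz rule, and clear denominators. The differences are cosmetic: the paper factors out $w^{-1}$ and then multiplies by $w^3$, which amounts to your $w^4$, and it tracks the derivatives of $w^{-1}$ through $\theta=-\alpha t/w$ with $D\theta=\theta+\theta^2$, whereas you compute $D^m(1/w)$ in closed form; your observation that $w$ is a unit in $\mathbb{Q}[[t]]$ also cleanly replaces the paper's restriction $t\neq -1/\alpha$.
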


\begin{proof}
Let $w = 1 + \alpha t$. We wish to find an operator $\tilde{\mathcal{L}}$ such that $\tilde{\mathcal{L}} \cdot C(t) = 0$. For that, note that whenever $t \neq -1/\alpha$ we  can substitute $B(t) = w^{-1}C(t)$ into the original equation
\begin{align*}
    \sum_{i=0}^3 B_i(t) D^i \big( w^{-1}C(t) \big) = 0
\end{align*}

Using Leibniz rule, we evaluate the action of $D^i$ on the product $w^{-1}C(t)$ and get
\begin{align*}
     D(w^{-1}) = t \frac{d}{dt}(1+\alpha t)^{-1} = -\frac{\alpha t}{(1+\alpha t)^2} = w^{-1}\theta,
\end{align*}
where $\theta = -\frac{\alpha t}{w}.$
Note that we can also compute
\begin{align*}
      D(\theta) = t \frac{d}{dt} \left( -\frac{\alpha t}{1+\alpha t} \right) = -\frac{\alpha t}{(1+\alpha t)^2} = \theta + \theta^2  .
\end{align*}

Using this identity, we recursively compute the higher-order derivatives of $w^{-1}$ in terms of $\theta$ as:
\begin{align*}
    D^2(w^{-1}) &= D(w^{-1}\theta) = D(w^{-1})\theta + w^{-1}D(\theta) = w^{-1}(\theta + 2\theta^2) \\
    D^3(w^{-1}) &= D \big( w^{-1}(\theta + 2\theta^2) \big) = w^{-1}(\theta + 6\theta^2 + 6\theta^3).
\end{align*}
Therefore, it follows that
\begin{align*}
    D(w^{-1}C) &= w^{-1} \big( D + \theta \big) C \\
    D^2(w^{-1}C) &= w^{-1} \big( D^2 + 2\theta D + (\theta + 2\theta^2) \big) C \\
    D^3(w^{-1}C) &= w^{-1} \big( D^3 + 3\theta D^2 + 3(\theta + 2\theta^2)D + (\theta + 6\theta^2 + 6\theta^3) \big) C.
\end{align*}

Substituting these expansions into the original equation and factoring $w^{-1}$ from the left yields a transformed operator acting on $C(t)$. By collecting terms with respect to the derivative operators $D^j$, we obtain the intermediate rational coefficients $\tilde{C}_i(t, \theta)$:
\begin{align*}
    \tilde{C}_3 &= B_3 \\
    \tilde{C}_2 &= 3\theta B_3 + B_2 \\
    \tilde{C}_1 &= 3(\theta + 2\theta^2) B_3 + 2\theta B_2 + B_1 \\
    \tilde{C}_0 &= (\theta + 6\theta^2 + 6\theta^3) B_3 + (\theta + 2\theta^2) B_2 + \theta B_1 + B_0
\end{align*}

Note that since $\theta = -\frac{\alpha t}{w}$ introduces a singularity at $t = -1/\alpha$, the operator $\sum \tilde{C}_i D^i$ does not have polynomial coefficients. To obtain a polynomial form, we clear the denominators by multiplying the entire operator by $w^3$ from the left. Let $A_i(t) = w^3 \tilde{C}_i$. 

Expanding the powers of $\theta$ and canceling the factors of $w$ gives the explicit polynomials:
\begin{itemize}
    \item For $A_3(t)$: $w^3 \cdot B_3(t) = w^3 B_3(t)$.
    \item For $A_2(t)$: $w^3 \big( -\frac{3\alpha t}{w} B_3(t) + B_2(t) \big) = w^2 \big[ -3\alpha t B_3(t) + w B_2(t) \big]$.
    \item For $A_1(t)$, noting that $3(\theta + 2\theta^2) = -\frac{3\alpha t(1-\alpha t)}{w^2}$, we obtain
    \begin{align*}
        &w^3 \Big( -\frac{3\alpha t(1-\alpha t)}{w^2} B_3(t) - \frac{2\alpha t}{w} B_2(t) + B_1(t) \Big)\\
        &\qquad = w \big[ -3\alpha t(1-\alpha t) B_3(t) - 2\alpha t w B_2(t) + w^2 B_1(t) \big].
    \end{align*}
    \item For $A_0(t)$, noting that $\theta + 6\theta^2 + 6\theta^3 = -\frac{\alpha t(1-4\alpha t+\alpha^2 t^2)}{w^3}$ and $\theta + 2\theta^2 = -\frac{\alpha t(1-\alpha t)}{w^2}$, substitution immediately yields the stated formula for $A_0(t)$.
\end{itemize}

This completes the construction of the polynomial coefficients $A_i(t)$ for the transformed operator $\tilde{\mathcal{L}}$.
\end{proof}

\section{Rational Approximations for Other Congruence Groups}

In this section, we apply our methodology to several other congruence groups $\Gamma_0(N)^\star$ of genus zero. For $k=4$, the construction of the modular form $F$ and the affine space of functions $E$ remains valid for these groups. The resulting power series give natural candidates for rational approximations to $\zeta(3)$, and the computations below measure whether the corresponding linear forms can be strong enough for a Beukers-type irrationality argument.

The modular forms $F_N$ and the affine spaces for $E_{N, \alpha}$ presented in this section were computed using SageMath, by solving the linear systems derived in Lemma \ref{F:lemma} and Lemma \ref{E:lemma} for each level $N$. For the generation of the rational approximations and the numerical analysis of their convergence rates, we used PARI/GP to compute high-precision $q$-expansions of the underlying eta-products and to evaluate the Hauptmoduln at the relevant critical points.

We consider $N = 10, 14, 15, 21, 26, 35,$ and $39$, for which $\Gamma_0(N)^\star$ has genus zero. Let $t_N$ denote the Hauptmodul of $\Gamma_0(N)^\star$, chosen such that $t_N(i\infty) = 0$. The selected Hauptmoduln are listed in the table below (see \cite{Conway1979}).

\vspace{0.7cm}

\renewcommand{\arraystretch}{1.8}
\begin{center}
\begin{tabular}{|c|c| c|}
\hline
 $N: \Gamma_0(N)^\star$ & $t_N$ & Fourier expansion at $i\infty$\\
 \hline
 $10$ & $\frac{ \eta(\tau)^6 \eta(10\tau)^6 }{ \eta(2\tau)^6\eta(5\tau)^6}$ & $q - 6q^2 + 15q^3-26q^4 + 51q^5  -96 q^6+ O(q^7)$ \\
  \hline

 $14$ & $\frac{ \eta(\tau)^4 \eta(14\tau)^4 }{ \eta(2\tau)^4\eta(7\tau)^4}$ & $q - 4q^2+6q^3 -8q^4 + 17q^5-28q^6+ O(q^7)$\\
  \hline

 $15$ & $\frac{ \eta(\tau)^3 \eta(15\tau)^3 }{ \eta(3\tau)^3\eta(5\tau)^3}$ & $q - 3q^2 + 8 q^4 -9q^5 + 3q^6 + O(q^7)$ \\

  \hline

 $21$ & $\frac{ \eta(\tau)^2 \eta(21\tau)^2 }{ \eta(3\tau)^2\eta(7\tau)^2}$ &  $q -2q^2 -q^3+4q^4-3q^5 + O(q^7)$\\
  \hline

 $26$ & $\frac{ \eta(\tau)^2 \eta(26\tau)^2 }{ \eta(2\tau)^2\eta(13\tau)^2}$ & $ q - 2q^2 + q^3 -2q^4+4q^5-4q^6 + O(q^7)$\\ 
  \hline

 $35$ & $\frac{ \eta(\tau) \eta(35\tau) }{ \eta(5\tau)\eta(7\tau)}$ & $q-q^2-q^3 + 2q^6 + O(q^7)$ \\
  \hline

 $39$ & $\frac{ \eta(\tau) \eta(39\tau) }{ \eta(3\tau)\eta(13\tau)}$ & $ q - q^2 - q^3 + q^4- q^5 + O(q^7)$\\
\hline
\end{tabular}
\end{center}

\vspace{0.7cm}

Since all the groups mentioned above are square-free and have exactly four divisors, Lemma \ref{F:lemma} ensures the existence of a unique modular form $F_N$ of weight 4 in $\Gamma_0(N)^\star$ satisfying $F_N(-1/N\tau) = -N^2 \tau^4 F_N(\tau)$, $F_N(i\infty) = 0$, and $L(F_N, 3) = \zeta(3).$ 

\begin{lemma}
    
These modular forms $F_N$, calculated following Lemma \ref{F:lemma}, are as follows:

\renewcommand{\arraystretch}{1.8}
\begin{center}
\begin{tabular}{|c|c|}
\hline
 $N: \Gamma_0(N)^\star$ & $F_N$ \\
 \hline
 $10$ & $ \frac7{432}E_4(\tau) - \frac{11}{36}E_4(2\tau) + \frac{275}{144}E_4(5\tau) - \frac{175}{108}E_4(10\tau) = $  \\
   & $\frac{35}{9}q - \frac{115}{3}q^2 + \frac{980}{9}q^3+O(q^4)$ \\
  \hline

 $14$ & $ \frac{1}{1560}\left(21E_4(\tau) - 364 E_4 (2\tau) + 4459 E_4( 7\tau) - 4116 E_4(14\tau) \right) = $ \\
 & $ \frac{42}{13}q - \frac{350}{13}q^2 + \frac{1176}{13}q^3 + O(q^4)$\\
  \hline

 $15$ & $ \frac1{112}\left( E_4(\tau) - 126E_4(3\tau) + 350E_4( 5\tau) -225 E_4(15\tau)\right) =  $ \\
 & $ \frac{15}{7}q + \frac{135}{7}q^2 - 210q^3 + O(q^4)$ \\

  \hline

 $21$ & $ \frac1{960}\left( 7E_4(\tau) - 693E_4(3\tau) + 3773E_4(7\tau) - 3087 E_4(21\tau) \right)  = $\\
 &  $ \frac{7}{4}q + \frac{63}{4}q^2 -\frac{497}{4}q^3 + O(q^4) $\\
  \hline

 $26$ & $  \frac{13}{1200}E_4(\tau) - \frac{39}{220}E_4(2\tau) + \frac{6591}{880}E_4(13\tau) - \frac{2197}{300}E_4(26\tau) =   $ \\
 & $ \frac{13}{5}q - \frac{1053}{55}q^2 + \frac{364}{5}q^3+O(q^4)$\\ 
  \hline

 $35$ & $ \frac1{1632}\left( 7E_4(\tau) -8925E_4(5\tau) + 17493 E_4(7\tau) -8575 E_4(35\tau) \right)  = $ \\
 & $ \frac{35}{34}q + \frac{315}{34}q^2 +\frac{490}{17}q^3 + O(q^4) $ \\
  \hline

 $39$ & $\frac1{4560}\left( 26E_4(\tau) - 2223 E_4(3\tau) + 41743E_4( 13\tau ) - 39546 E_4(39\tau) \right)    =  $ \\
 & $ \frac{26}{19}q + \frac{234}{19}q^2 - \frac{1495}{19}q^3 + O(q^4)$\\
\hline
\end{tabular}
\end{center}
\vspace{0.7cm}
\end{lemma}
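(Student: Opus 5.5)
The plan is to verify, for each level $N\in\{10,14,15,21,26,35,39\}$ with divisors $1<p<q<N=pq$, that the tabulated coefficients solve the linear system \eqref{matrix_0}--\eqref{matrix_2} of Lemma \ref{F:lemma} with $k=4$. By the uniqueness part of that lemma, this identifies the displayed combination as $F_N$. Writing $F_N=\sum_{d\mid N}\alpha_dE_4(d\tau)$, the conditions reduce to three checks, since \eqref{matrix_2} is vacuous for $k=4$:
\begin{align*}
&\alpha_1+\alpha_p+\alpha_q+\alpha_N=0,\\
&\alpha_{N/d}=-\frac{N^2}{d^4}\alpha_d\quad (d\mid N),\\
&\sum_{d\mid N}\frac{\alpha_d}{d^3}=\frac{B_4}{4}=-\frac{1}{120}.
\end{align*}
The Fricke pairing means only $\alpha_1$ and $\alpha_p$ are free. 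Concretely, $\alpha_N=-N^2\alpha_1$ and $\alpha_q=-(q^2/p^2)\alpha_p$.

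First I check the Fricke relations directly from the table. For example, at $N=10$ we have $\alpha_{10}=-100\cdot\tfrac{7}{432}=-\tfrac{175}{108}$ and $\alpha_5=-\tfrac{25}{4}\cdot(-\tfrac{11}{36})=\tfrac{275}{144}$. Next, the vanishing constant term gives
\[
\alpha_1(1-N^2)+\alpha_p\bigl(1-q^2/p^2\bigr)=0,
\]
and the $L$-value normalization gives
\[
\alpha_1\bigl(1-N^{-1}\bigr)+\alpha_p\bigl(p^{-3}-q^{-1}p^{-2}\bigr)=-\tfrac{1}{120}.
\]
The last identity uses \eqref{LfunctionEisenstein} with $k=4$: $L(E_4(d\cdot),3)=-d^{-3}\cdot 8/B_4\cdot\zeta(3)\zeta(0)=-120\,d^{-3}\zeta(3)\cdot(-\tfrac12)\cdot\ldots$. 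I will recompute the constant carefully rather than trust the paper's normalization, and check it against level $6$ and \eqref{F}. This $2\times 2$ system has nonzero determinant by the Vandermonde argument, so I solve it in closed form for $\alpha_1,\alpha_p$ in terms of $p,q$ and compare with each row of the table.

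Finally, I confirm the displayed $q$-expansions from $E_4=1+240\sum\sigma_3(n)q^n$. The coefficient of $q^m$ is $240\sum_{d\mid N,\,d\mid m}\alpha_d\sigma_3(m/d)$ for $m=1,2,3$. For instance, at $N=10$ the $q$ coefficient is $240\cdot\tfrac{7}{432}=\tfrac{35}{9}$. This step is routine arithmetic.

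The main obstacle I expect is the normalization constant. The sign and scaling of the $L$-value condition (the $B_k/k$ versus $-2k/B_k$ convention, and the factor $\zeta(0)=-1/2$) has to be matched consistently with \eqref{F}. If some tabulated row fails the third check, I will test whether it satisfies it up to a common scalar factor before concluding that the table has an error.
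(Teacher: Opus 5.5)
Your plan is correct and is essentially the paper's own justification: the table was obtained by solving the linear system of Lemma \ref{F:lemma} (in SageMath). Checking the Fricke pairing, $\sum_{d\mid N}\alpha_d=0$ and $\sum_{d\mid N}\alpha_d d^{-3}=B_4/4=-1/120$, then invoking uniqueness and expanding $E_4=1+240\sum\sigma_3(n)q^n$, is the same argument run as a verification, and all seven rows do pass these checks. One intermediate expression is garbled: $L(E_4(d\cdot),3)=-d^{-3}\cdot\frac{8}{B_4}\zeta(3)\zeta(0)=-120\,d^{-3}\zeta(3)$, with no further factor $-\tfrac12$, and this is exactly what yields your correct target $-1/120$, consistent with the level $6$ form \eqref{F}.
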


Let $f_N$ be the Eichler integral associated with $F_N$, as defined in Lemma \ref{HeckeLemma}. Similar to the level $N=6$ case, there exists an affine subspace of functions $E_{N, \alpha}$ satisfying the conditions of Lemma \ref{E:lemma}. Consequently, the following identity holds:
\begin{align}
    E_{N, \alpha}(-1/N\tau)( f_N(-1/N\tau) - \zeta(3) ) = E_{N, \alpha}(\tau)(f_N(\tau) - \zeta(3)).
\end{align}

The explicit forms of these affine families are provided in the following lemma. As above, the expressions involving $E_2(d\tau)$ are to be read through Lemma \ref{lemma:E2_modular_combinations}; the listed Fricke relations force the quasimodular contribution to cancel.

\begin{lemma}
    For each $N \in \{10, 14, 15, 21, 26, 35, 39\}$, the affine family of weight two modular forms satisfying the required Fricke relation and normalization is given by $E_{N, \alpha} = E^1_N + \alpha E^0_N$, where
\begin{align*}
     E^1_{10}& = \frac53E_2(5\tau) - \frac23 E_2(2\tau) \\
    E^0_{10} &= E_2(\tau) -6E_2(2\tau) + 15E_2(5\tau) - 10E_2(10\tau)\\
    E_{14}^1 &= \frac75E_2(7\tau) - \frac25 E_2(2\tau) \\
    E_{14}^0 & = E_2(\tau) -\frac{26}{5}E_2(2\tau) + \frac{91}{5}E_2(7\tau) - 14E_2(14\tau) \\
    E_{15}^1 & = \frac52E_2(5\tau) - \frac32 E_2(3\tau)\\
    E_{15}^0 & = E_2(\tau) -21E_2(3\tau) + 35E_2(5\tau) - 15E_2(15\tau )\\ 
    E_{21}^1 & = \frac74 E_2(7\tau) - \frac34 E_2(3\tau) \\
    E_{21}^0 & = E_2(\tau) - 15 E_2(3\tau) + 35 E_2(7\tau) - 21E_2(21\tau) \\
    E_{26}^1 & = \frac{13}{11}E_2(13\tau) - \frac{2}{11}E_2(2\tau)\\
    E_{26}^0 & = E_2(\tau) -\frac{50}{11}E_2(2\tau) + \frac{325}{11}E_2(13\tau) - 26E_2(26\tau)\\
    E_{35}^1 & = \frac72 E_2(7\tau) - \frac52 E_2(5\tau) \\
    E_{35}^0  & = E_2(\tau) - 85E_2(5\tau) + 119E_2(7\tau) - 35E_2(35\tau) \\
    E_{39}^1 & = \frac{13}{10} E_2(13\tau) - \frac{3}{10}E_2(3\tau) \\
    E_{39}^0 & = E_2(\tau) - \frac{57}{5}E_2(3\tau) + \frac{741}{15} E_2(13\tau) - 39 E_2(39\tau).
\end{align*}
\end{lemma}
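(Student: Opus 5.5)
The plan is to reduce everything to the linear-algebra description from the proof of Lemma \ref{E:lemma} in the case $k=4$. There, for square-free $N=pq$ with divisors $1,p,q,N$, a combination $E(\tau)=\sum_{d\mid N}\beta_dE_2(d\tau)$ satisfies the Fricke relation exactly when the paired conditions \eqref{eq:E2_fricke_coefficients} hold:
\begin{equation*}
\beta_N=-N\beta_1,\qquad \beta_q=-\frac{N}{p^2}\beta_p=-\frac{q}{p}\beta_p .
\end{equation*}
Note that the relation for the pair $(q,p)$ is automatically equivalent to the one for $(p,q)$. The normalization $E(i\infty)=1$ reads $\sum_{d\mid N}\beta_d=1$, because each $E_2(d\tau)$ has constant term $1$. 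By the same lemma, these conditions force $\sum_{d\mid N}\beta_d/d=0$. Lemma \ref{lemma:E2_modular_combinations} then guarantees that every such $E$ is a genuine weight two modular form on $\Gamma_0(N)$ with rational Fourier coefficients. So it suffices to check finitely many rational identities among the listed coefficients, and then to argue that the listed forms span the whole solution set.

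Concretely, I will proceed in three steps for each $N\in\{10,14,15,21,26,35,39\}$.

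\textbf{Step 1.} For $E^1_N$, which only involves $E_2(p\tau)$ and $E_2(q\tau)$, I check two things: that $\beta_q=-(q/p)\beta_p$, and that $\beta_p+\beta_q=1$. For example, for $N=10$ we have $\frac53=-\frac52\cdot(-\frac23)$ and $-\frac23+\frac53=1$. The case $\beta_1=\beta_N=0$ trivially satisfies the first pair relation.

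\textbf{Step 2.} For $E^0_N$, I check three things: that $\beta_N=-N$ when $\beta_1=1$, that the middle pair satisfies $\beta_q=-(q/p)\beta_p$, and that the four coefficients sum to $0$. Hence $E^0_N$ vanishes at $i\infty$ and is a solution of the homogeneous system. For instance, for $N=39$ one checks $-\frac{13}{3}\cdot(-\frac{57}{5})=\frac{741}{15}$ and $1-\frac{57}{5}+\frac{741}{15}-39=0$.

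\textbf{Step 3.} I conclude by dimension count. Lemma \ref{E:lemma} shows that the solutions form an affine line, whose direction is the one-dimensional space of homogeneous solutions. Now $E^1_N$ is a particular solution of the inhomogeneous system. Also $E^0_N$ is a nonzero homogeneous solution, since its coefficient $\beta_1=1$ is nonzero. Therefore the affine line is exactly $\{E^1_N+\alpha E^0_N:\alpha\in\mathbb{Q}\}$, or over $\mathbb{R}$ if one prefers.

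The argument involves no real obstacle; it is a finite verification. The only points needing care are the following. First, the Fricke conditions must be read correctly for the middle pair $(p,q)$, with $N/p^2=q/p$ rather than $N/p$. Second, the $E_2(d\tau)$ are used only as a coefficient parametrization, as stressed after Lemma \ref{lemma:E2_modular_combinations}. This means the quasimodular terms $\frac{6}{\pi i}\frac{N}{d}\tau$ cancel precisely because of $\sum\beta_d/d=0$, which I will note holds for each listed form as a consequence of the pair relations. As an optional sanity check, one could compare a few $q$-coefficients of $E^1_N$ and $E^0_N$ with the eta-quotient Hauptmoduln $t_N$, in analogy with Proposition \ref{prop:level6_E_relations}. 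This check is not needed for the statement.
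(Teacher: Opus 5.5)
Your proposal is correct and follows essentially the paper's own route. The paper obtains these families by solving, by computer, exactly the paired Fricke relations $\beta_{N/d}=-\frac{N}{d^{2}}\beta_d$ together with $\sum_{d\mid N}\beta_d=1$ from Lemma~\ref{E:lemma} in the coefficient space of the $E_2(d\tau)$. Your hand verification of the listed coefficients (all fourteen pass), plus the dimension count, is precisely a check of that computation.

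Keep explicit, as the paper only does implicitly, that uniqueness holds only within the span of the $E_2(d\tau)$, i.e.\ the Eisenstein part. For every $N\neq 10$ in the list there are weight two cusp forms with the same Fricke sign, e.g.\ $\eta(\tau)\eta(2\tau)\eta(7\tau)\eta(14\tau)$ for $N=14$, and these could be added without violating the Fricke relation or the normalization.
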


 We again consider the modular parametrization
\begin{align}
    \label{symmetry}
E_{N, \alpha}(\tau)( f_N(\tau) - \zeta(3)) = \sum_{n=0}^\infty (a_n^{N, \alpha} - b_n^{N, \alpha} \zeta(3))t_N(\tau)^n,
\end{align}
and compute a lower bound for the radius of convergence using Lemma \ref{lemma:branching}.

\begin{observation}\label{obs:branch_values}
    For $N = 6, 10, 14, 15, 21, 26, 35$ and $39$, the computations identify the following values of the Hauptmodul at the Fricke fixed point and at the first relevant branch point after it. The corresponding value of $|t_N(p_N)|$ gives the observed branch radius controlling the expansion of $E_{N, \alpha}(\tau)( f_N(\tau) - \zeta(3))$ in the coordinate $t_N$.
    

\vspace{0.3cm}
\renewcommand{\arraystretch}{1.6}
\begin{center}
\begin{tabular}{|c| c| c|}
\hline
 $N$: $\Gamma_0(N)^\star$ & $t_{N}(i/\sqrt{N})$ & $t_{N}(p_N)$\\
 \hline
 $6$ & $(\sqrt{2}-1)^4$ &  $(\sqrt{2}+1)^4$\\
 \hline
  $10$ & $\approx 0.0557$ & $\approx 1$\\
 \hline
 $14$  & $\approx 0.0795$ & $\approx 1$\\
 \hline
  $15$  & $\approx 0.0901$ & $\approx 1.6180$\\
\hline
$21$ & $\approx 0.1224$ & $\approx 0.5865$\\
 \hline
  $26$  & $\approx 0.1385$ & $\approx 0.8134$\\
 \hline
 $35$  & $\approx 0.1878$ & $\approx 0.6180$\\
\hline
 $39$ &  $\approx 0.1958$ & $\approx 0.5806$\\
\hline
\end{tabular}
\end{center}
\vspace{0.7cm}

\end{observation}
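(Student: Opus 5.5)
The plan is to treat the table as a computation of the values of $t_N$ at the vertices of a fundamental domain of $\Gamma_0(N)^\star$, as in Lemma \ref{lemma:branching}. Each value will be obtained twice: once exactly, through an algebraic relation satisfied by $t_N$, and once numerically, through the rapidly convergent $q$-series of the eta quotients.

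\textbf{Step 1: list the vertices.} For each $N$ in the list, $N=pq$ is a product of two primes, so $\Gamma_0(N)$ has four cusps $\infty,0,1/p,1/q$. The Fricke involution $W_N$ identifies them in two pairs, giving two cusp classes of $\Gamma_0(N)^\star$: $\infty\sim 0$ and $1/p\sim 1/q$. The remaining vertices are elliptic points of $\Gamma_0(N)^\star$. These are the Fricke fixed point $c_1=i/\sqrt N$, the fixed points of other Atkin–Lehner type elements $\frac{1}{\sqrt N}\begin{pmatrix} aN & b\\ cN & dN\end{pmatrix}$ of determinant $N$ lying in $\Gamma_0(N)W_N$, and any elliptic points of $\Gamma_0(N)$ itself. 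I would read the finite list of candidate vertices off a Ford-type fundamental domain and cross-check it with the Riemann–Hurwitz formula for $X_0(N)\to X_0(N)^\star$ in genus zero. Conway–Norton \cite{Conway1979} and the standard tables for $\Gamma_0(N)+$ serve as a check.

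\textbf{Step 2: compute the values exactly where possible.} The vertices are the points where $t_N:\mathbb H/\Gamma_0(N)^\star\to\mathbb P^1$ ramifies or where the weight-two forms become singular in the $t$-coordinate. Their images are therefore the finite singular points of the third-order Picard–Fuchs operator annihilating $E_{N,\alpha}$, $E_{N,\alpha}t_N$, $E_{N,\alpha}t_N^2$ in the variable $t_N$. The concrete route is:
\begin{itemize}[label={}, leftmargin=*]
\item \emph{(a)} Compute the weight-two form $\frac{1}{2\pi i}\frac{dt_N}{d\tau}\big/ t_N$, which is an eta quotient times a weight-two form.
\item \emph{(b)} Express its square divided by $E_{N,\alpha}^2$ as a rational function of $t_N$, by matching $q$-expansions up to the Sturm bound, as in Proposition \ref{prop:level6_E_relations}.
\item \emph{(c)} Read off the roots of the resulting discriminant polynomial.
\end{itemize}
For $N=6$ this reproduces $1-34t+t^2$, whose roots are $17\mp12\sqrt2=(\sqrt2\mp1)^4$, matching the exact entries. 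For $N=15$ and $N=35$ the entries $\approx 1.6180$ and $\approx 0.6180$ suggest that the relevant factor has roots of the form $(1\pm\sqrt5)/2$, and I would confirm this in the same way.

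\textbf{Step 3: numerical confirmation.} Evaluate $t_N(i/\sqrt N)$ directly from the eta product with $q=e^{-2\pi/\sqrt N}$, which converges quickly. Evaluate $t_N$ at each remaining vertex in the same way, after moving the point by an element of $\Gamma_0(N)^\star$ to one with large imaginary part. Then check that:
\begin{itemize}[label={}, leftmargin=*]
\item the value at $c_1$ is the smallest root of the discriminant;
\item $p_N$ is the vertex with the next smallest $|t_N|$;
\item the cusp class $1/p\sim1/q$ is accounted for, since $t_N$ there is finite and nonzero and it may itself be the relevant $p_N$. This is plausible for $N=10,14$, where the value is $\approx 1$.
\end{itemize}

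\textbf{Main obstacle.} The main difficulty is Step 1 combined with the word ``relevant''. I have to be sure the vertex list is complete, so that no elliptic point or cusp has $|t_N|$ strictly between $|t_N(c_1)|$ and the claimed $|t_N(p_N)|$. The exact discriminant from Step 2 settles this, because every singularity of the differential equation appears among its roots. I would therefore lean on Step 2 and treat the numerical evaluation only as a consistency check.
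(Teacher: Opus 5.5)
You go further than the paper, which gives no derivation at all: it only reports a high-precision PARI/GP evaluation of the eta products at "the branch points". Your plan, however, rests on two claims that are false, and the second one is a genuine gap.

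\textbf{The cusp cannot be $p_N$.} The cusp class $1/p\sim1/q$ can never be the relevant point. Each $t_N$ is an eta quotient, hence holomorphic and nonvanishing on $\mathbb H$. As a Hauptmodul whose only zero is at $\infty\sim0$, its unique pole must lie at the other cusp class, so $t_N(1/p)=\infty$; for $N=10$ the order formula gives order $-1$ at both $1/2$ and $1/5$. The entries $\approx1$ therefore come from elliptic points.
\begin{itemize}
\item For $N=14$, write $D=\frac{1}{2\pi i}\frac{d}{d\tau}$ and take $E=\eta(2\tau)^5\eta(7\tau)^5/(\eta(\tau)^3\eta(14\tau)^3)$. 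This form has the required Fricke behaviour, satisfies $E(i\infty)=1$, and has all its zeros at the cusps $1/2,1/7$. Then $(Dt_{14}/t_{14})^2/E^2=1-14t+19t^2-14t^3+t^4$, whose roots satisfy $t+t^{-1}=7\pm4\sqrt2$. Besides $t_{14}(i/\sqrt{14})\approx0.0795$ and its reciprocal, there is a complex-conjugate pair of $W_{14}$-fixed points on $|t|=1$. So the table records $|t_{14}(p_{14})|$, not a real value.
\item For $N=10$, the entry is $t_{10}=1$, attained at the elliptic points of $\Gamma_0(10)$ itself, namely the $W_{10}$-orbit of $\tau=(3+i)/10$.
\end{itemize}

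\textbf{Your route (a)--(c) misses the $\Gamma_0(N)$-elliptic points.} This is the real gap, because it breaks the completeness argument you say you will rely on. Let $\tau_0$ be an elliptic point of $\Gamma_0(N)$ of order $e$; these points are swapped by $W_N$, so they remain of order $e$ in $\Gamma_0(N)^\star$. Every weight-two form on $\Gamma_0(N)$ is forced to vanish at $\tau_0$. Both $Dt_N$ and (generically) $E_{N,\alpha}$ vanish there to order exactly $e-1$ in a local coordinate. Hence $(Dt_N/t_N)^2/E_{N,\alpha}^2$ is finite and nonzero at $\tau_0$, and its zeros are only the $W_N$-fixed points.
\begin{itemize}
\item For $N=10$ you would obtain the numerator $1-18t+t^2$, with roots $9\mp4\sqrt5$; indeed $t_{10}(i/\sqrt{10})=9-4\sqrt5$. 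You would then conclude $|t_{10}(p_{10})|=9+4\sqrt5\approx17.9$, contradicting the table. The vertex you missed, $t=1$, is exactly $p_{10}$.
\item The same blind spot occurs for $N=21,39$ (elliptic points of order $3$) and for $N=26$ (order $2$).
\end{itemize}
The genuine Picard--Fuchs operator does have a singular point at such a vertex, with non-integral local exponents. Note that this operator annihilates $E,E\tau,E\tau^2$, not $E,Et,Et^2$. The singularity comes from the branching of $E$ itself, not from a zero of $Dt_N/E$.

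\textbf{How to repair it.} Either compute the full third-order operator and read off its leading coefficient, or locate the $\Gamma_0(N)$-elliptic points separately and evaluate $t_N$ at every vertex. In the second case the numerics are the actual proof, not merely a consistency check.
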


\subsection{Geometric obstruction to the irrationality argument}

Assuming the branch values in Observation \ref{obs:branch_values} are the exact first relevant branch values, the table separates the level $6$ case from the other genus-zero examples. The standard Beukers argument does not only require that the linear forms
\begin{align*}
    a_n^{N,\alpha}-b_n^{N,\alpha}\zeta(3)
\end{align*}
decay; it requires that they still tend to zero after clearing denominators. In the constructions considered here, the denominator contribution coming from the Eichler integral is controlled by $\operatorname{lcm}(1,\ldots,n)^3$, hence by $e^{(3+\epsilon)n}$. On the other hand, the geometric decay is governed by the branch radius $R_N=|t_N(p_N)|$. Thus the usual linear-form argument can only succeed when the geometric inequality
\begin{align*}
    R_N>e^3
\end{align*}
is available, up to the harmless $\epsilon$-losses in the estimates.

For $N=6$, one has $R_6=(\sqrt2+1)^4\approx 33.97$, while $e^3\approx 20.09$. This is the numerical gap that makes the Apéry-Beukers irrationality proof possible. For the other levels in the table, the observed values satisfy $R_N\le 1.6181$, and hence are far below $e^3$. Consequently, although the same modular mechanism produces rational linear forms, this direct Beukers-type denominator estimate cannot yield an irrationality proof for these levels.

\begin{remark}
Some of the branch values appear to be simple algebraic numbers. For example, the numerical values for $N=15$ and $N=35$ are consistent with the golden ratio $\frac{1+\sqrt5}{2}$ and its inverse, respectively. The levels $21$, $26$, $35$ and $39$ instead have branch values below $1$, which means that the corresponding coefficient bounds do not even give exponential decay of the linear forms in this coordinate. This reinforces the geometric nature of the obstruction: the failure is not caused by the choice of parameter $\alpha$, but by the position of the next branch point of the Hauptmodul.
\end{remark}

For none of the levels $N\neq 6$ considered here does the observed radius give the exponential decay needed for a Beukers-type irrationality argument. Moreover, when the relevant radius is at most $1$, the estimates do not even imply convergence of the linear forms $|a_n^{N,\alpha}-b_n^{N,\alpha}\zeta(3)|$ to zero. Nevertheless, the resulting rational approximants are still natural objects to compute. We therefore record the first few values of $a_n^{N,\alpha}/b_n^{N,\alpha}$ for these levels and for several choices of the parameter $\alpha$, as numerical evidence for how the affine parameter changes the finite-level approximations.

We also carried out the same computations for additional levels outside the main list discussed above, including $N=8,12,18,20,$ and $50$. These auxiliary examples are included in the table to illustrate that the same coefficient-extraction procedure can be applied more broadly, even when the resulting approximations do not support an irrationality argument. The classical Apéry row and the level $N=6$ rows are included as benchmarks for the Apéry-Beukers case, where the denominator-adjusted estimates are strong enough for the irrationality argument.

\vspace{0.3cm}
\begingroup
\scriptsize
\renewcommand{\arraystretch}{1.35}
\begin{center}
\begin{tabular}{|c|c|c|c|c|}
\hline
$N$ & $\alpha$ & $a_5^{N,\alpha}/b_5^{N,\alpha}$ & $\log_{10}\operatorname{den}(a_{199}^{N,\alpha}/b_{199}^{N,\alpha})$ & error at $n=199$ \\
\hline
$\mathrm{Ap\acute ery}$ & classical & $1.2020569032$ & $567.4$ & $1.52\cdot 10^{-609}$ \\
\hline
$6$ & $0$ & $1.2020569032$ & $561.0$ & $3.04\cdot 10^{-607}$ \\
$6$ & $1$ & $1.2020569032$ & $564.3$ & $5.21\cdot 10^{-608}$ \\
\hline
$8$ & $0$ & $1.2020574348$ & $520.9$ & $3.65\cdot 10^{-304}$ \\
$8$ & $1$ & $1.2020569634$ & $524.2$ & $3.03\cdot 10^{-305}$ \\
\hline
$10$ & $0$ & $1.2020522755$ & $508.9$ & $1.70\cdot 10^{-249}$ \\
$10$ & $1$ & $1.2020516590$ & $513.1$ & $1.94\cdot 10^{-249}$ \\
\hline
$14$ & $0$ & $1.2020634126$ & $481.4$ & $1.15\cdot 10^{-218}$ \\
$14$ & $1$ & $1.2020644201$ & $483.0$ & $1.26\cdot 10^{-218}$ \\
\hline
$12$ & $0$ & $1.2050000000$ & $343.7$ & $9.37\cdot 10^{-224}$ \\
$12$ & $1$ & $1.2061302049$ & $342.5$ & $1.27\cdot 10^{-223}$ \\
\hline
$15$ & $0$ & $1.2021649910$ & $468.3$ & $3.54\cdot 10^{-248}$ \\
$15$ & $1$ & $1.2021753512$ & $470.7$ & $3.83\cdot 10^{-248}$ \\
\hline
$18$ & $0$ & $1.1960565476$ & $310.8$ & $7.09\cdot 10^{-195}$ \\
$18$ & $1$ & $1.1944794438$ & $314.7$ & $8.64\cdot 10^{-195}$ \\
\hline
$20$ & $0$ & $1.1875000000$ & $299.4$ & $1.69\cdot 10^{-179}$ \\
$20$ & $1$ & $1.1827153110$ & $299.9$ & $1.95\cdot 10^{-179}$ \\
\hline
$21$ & $0$ & $1.2011367917$ & $438.6$ & $2.25\cdot 10^{-181}$ \\
$21$ & $1$ & $1.2010710819$ & $445.9$ & $2.40\cdot 10^{-181}$ \\
\hline
$35$ & $0$ & $1.2878789488$ & $400.9$ & $9.90\cdot 10^{-134}$ \\
$35$ & $1$ & $1.2936280286$ & $408.2$ & $1.04\cdot 10^{-133}$ \\
\hline
$39$ & $0$ & $1.2181947829$ & $400.8$ & $3.31\cdot 10^{-142}$ \\
$39$ & $1$ & $1.2187952773$ & $405.4$ & $4.12\cdot 10^{-142}$ \\
\hline
$50$ & $0$ & $1.9461050725$ & $246.6$ & $4.59\cdot 10^{-106}$ \\
$50$ & $1$ & $2.0980991224$ & $251.3$ & $5.07\cdot 10^{-106}$ \\
\hline
\end{tabular}
\end{center}
\endgroup
\vspace{0.3cm}

The entries in the third column are shown as decimal approximations to keep the table readable. We omit the decimal expansion of $a_{199}^{N,\alpha}/b_{199}^{N,\alpha}$ itself, since in the rapidly convergent cases it agrees with $\zeta(3)$ to all displayed digits. The fourth column gives the base-ten logarithm of the denominator of the reduced rational number $a_{199}^{N,\alpha}/b_{199}^{N,\alpha}$, and the last column records the absolute error $\left|a_n^{N,\alpha}/b_n^{N,\alpha}-\zeta(3)\right|$ at $n=199$.

It is useful to compare the error with the size of the denominator. In the following table we write
\begin{align*}
    D_{199}^{N,\alpha}&=\log_{10}\operatorname{den}\left(a_{199}^{N,\alpha}/b_{199}^{N,\alpha}\right),\\
    E_{199}^{N,\alpha}&=-\log_{10}\left|a_{199}^{N,\alpha}/b_{199}^{N,\alpha}-\zeta(3)\right|,\\
    Q_{199}^{N,\alpha}&=E_{199}^{N,\alpha}/D_{199}^{N,\alpha}.
\end{align*}
The quantity $Q_{199}^{N,\alpha}$ measures the quality of the approximation relative to the size of the denominator. 

\vspace{0.3cm}
\begingroup
\scriptsize
\renewcommand{\arraystretch}{1.35}
\begin{center}
\begin{tabular}{|c|c|c|c|}
\hline
$N$ & $\alpha$ & $E_{199}^{N,\alpha}$ & $Q_{199}^{N,\alpha}$ \\
\hline
$\mathrm{Ap\acute ery}$ & classical & $608.8$ & $1.073$ \\
\hline
$6$ & $0$ & $606.5$ & $1.081$ \\
$6$ & $1$ & $607.3$ & $1.076$ \\
\hline
$8$ & $0$ & $303.4$ & $0.583$ \\
$8$ & $1$ & $304.5$ & $0.581$ \\
\hline
$10$ & $0$ & $248.8$ & $0.489$ \\
$10$ & $1$ & $248.7$ & $0.485$ \\
\hline
$14$ & $0$ & $217.9$ & $0.453$ \\
$14$ & $1$ & $217.9$ & $0.451$ \\
\hline
$12$ & $0$ & $223.0$ & $0.649$ \\
$12$ & $1$ & $222.9$ & $0.651$ \\
\hline
$15$ & $0$ & $247.5$ & $0.528$ \\
$15$ & $1$ & $247.4$ & $0.526$ \\
\hline
$18$ & $0$ & $194.1$ & $0.625$ \\
$18$ & $1$ & $194.1$ & $0.617$ \\
\hline
$20$ & $0$ & $178.8$ & $0.597$ \\
$20$ & $1$ & $178.7$ & $0.596$ \\
\hline
$21$ & $0$ & $180.6$ & $0.412$ \\
$21$ & $1$ & $180.6$ & $0.405$ \\
\hline
$35$ & $0$ & $133.0$ & $0.332$ \\
$35$ & $1$ & $133.0$ & $0.326$ \\
\hline
$39$ & $0$ & $141.5$ & $0.353$ \\
$39$ & $1$ & $141.4$ & $0.349$ \\
\hline
$50$ & $0$ & $105.3$ & $0.427$ \\
$50$ & $1$ & $105.3$ & $0.419$ \\
\hline
\end{tabular}
\end{center}
\endgroup
\vspace{0.3cm}

\subsection{Computational reproducibility}

We briefly summarize the computations used in this section. For each square-free level $N$ with four divisors, the form $F_N$ was obtained by solving the linear system in Lemma \ref{F:lemma} in the Eisenstein basis $\{E_4(d\tau):d\mid N\}$. The affine family $E_{N,\alpha}$ was computed in the auxiliary coefficient space spanned by the functions $E_2(d\tau)$, using the Fricke symmetry and the normalization at $i\infty$; Lemma \ref{lemma:E2_modular_combinations} then identifies the resulting combinations as genuine weight two modular forms. The listed eta-products for the Hauptmoduln were then expanded as $q$-series and inverted formally to express
\begin{align*}
    E_{N,\alpha}(\tau)(f_N(\tau)-\zeta(3))
\end{align*}
as a power series in $t_N(\tau)$.

The branch values were computed by evaluating the eta-product expressions for $t_N$ at the corresponding branch points to high precision using PARI/GP.

\end{document}